\begin{document}

\title{Boundary regularity for the free boundary in the one-phase problem}

\author[H. Chang-Lara]{H\'ector Chang-Lara}
\address{Department of Mathematics, Columbia University, New York, NY 10027}
\email{changlara@math.columbia.edu}

\author[O. Savin]{Ovidiu Savin}
\address{Department of Mathematics, Columbia University, New York, NY 10027}
\email{osavin@math.columbia.edu}

\begin{abstract}
We consider the  Bernoulli one-phase free boundary problem in a domain $\Omega$ and show that the free boundary $F$ is $C^{1,1/2}$ regular in a neighborhood of the fixed boundary $\partial \Omega$. We achieve this by relating the behavior of $F$ near $\partial \Omega$ to a Signorini-type obstacle problem.
\end{abstract}


\maketitle


\section{Introduction}\label{sec:intro}

The Bernoulli one-phase problem consists in finding  a nonnegative function $u$ which is fixed on the boundary $\partial \Omega$ of some given domain $\W\ss\R^n$, such that $u$ is harmonic in its positive set $\W^+ = \{u>0\} \cap \W$, and $u$ has a prescribed gradient over the \textit{free boundary} $F = \p \W^+ \cap \W$. Precisely, given $\Omega$ and two functions $g \ge 0$ and $Q>0$, we need to find $u$ which satisfies
\begin{align*}
\begin{cases}
\D u = 0 \text{ in } \W^+ = \{u>0\} \cap \W,\\
|Du| = Q(x) \text{ on } F = \p \W^+ \cap \W,\\
u = g \text{ on } \p\W.
\end{cases}
\end{align*}
In hydrodynamics these equations can be found in models of jets and cavities where the solution $u$ is the stream function for an incompressible and irrotational fluid \cite{MR679313}.

Solutions can be constructed either variationally as critical points of the associated energy functional (see \cite{MR618549})
$$J(u)=\int_\Omega |D u|^2 + Q^2\chi_{\{u>0\}} \, \, dx, $$
or by a viscosity solution approach using Perron's method \cite{MR1029856}. 

The local regularity theory for the free boundary $F$ at interior points of $\Omega$ is available for solutions $u$ which satisfy an additional nondegeneracy condition which requires for $u \sim dist(\cdot, F)$.  This was developed by Caffarelli in a series of papers in the 80's. The nondegeneracy condition is satisfied for example if either a) $u$ is a {\it minimizer} of the functional $J$ or b) $u$ is the {\it minimal supersolution}.

In this paper we address the regularity of the free boundary $F$ near a portion of the fixed boundary $\p \Omega$ where $u$ vanishes.

The situation is the following. We assume that $g=0$ over a portion of the boundary $Z\ss \p\W$, with $Z$ relatively open in the induced topology of $\p \Omega$. Assume for simplicity that $Z$ is locally a smooth hypersurface. We are interested in the behavior of the free boundary $F$ near $Z$, or in other words how $F$ separates from $Z$. Notice that $Z$ acts as an obstacle for the ``extension" $\bar F$ of the free boundary $F$ to the whole $\bar \Omega$ which is defined as $$\bar F = \p\W^+\cap \{u=0\}.$$

Moreover, it is not difficult to check that if we are in either of the situations a) or b) above then 
\begin{align*}
|Du| \geq Q(x) \text{ over } \L = \bar F \cap Z.
\end{align*}
This can be interpreted as a nondegeneracy condition for $u$ on the coincidence set $\L$, or equivalently as a stability condition for $F$. From the point of view of hydrodynamic models, the separation of $\bar F$ from $Z$ describes how the fluid detaches from a fixed boundary with slip condition.

\begin{figure}[t!]
\begin{center}
\includegraphics[width=14cm]{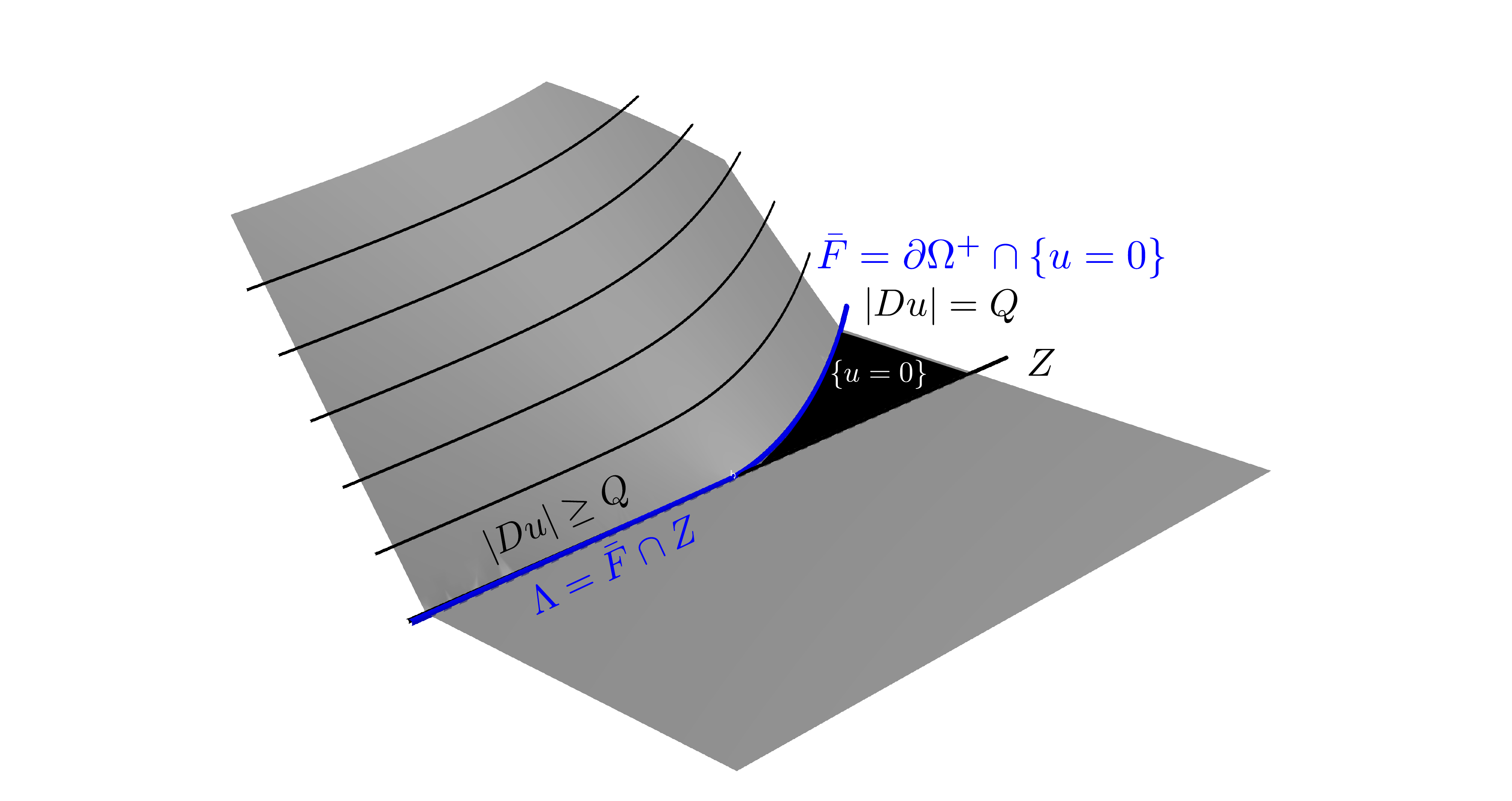}
\end{center}
\caption{Graph of the solution $u$.}
\label{fig:sigma_in_gamma}
\end{figure}

Our main result is the following.

\begin{theorem}\label{thm:main1}
Let $\W\ss\R^n$ be a domain with a $C^{1,\a}$ boundary portion $Z\ss\p\W$ for some $\a>1/2$, and let $Q\in C^{0,1}(\bar \W )$, $Q>0$. Let $u: \bar \Omega \to \R^+$ be a viscosity solution of
\begin{align*}
\begin{cases}
\D u = 0 \text{ in } \W^+ = \{u>0\}\cap \W,\\
u = 0 \text{ on } Z,\\
|Du| \geq Q(x) \text{ on } \bar F \cap Z,\\
|Du| = Q(x) \text{ on } F.
\end{cases}
\end{align*}
Then $\bar F$ is $C^{1,1/2}$ regular in a neighborhood of every $x_0 \in \L = \bar F\cap Z$.
\end{theorem}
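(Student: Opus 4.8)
The plan is to reduce, near a contact point $x_0\in\Lambda$, the study of $\bar F$ to a thin obstacle (Signorini-type) problem — whose solutions are sharply $C^{1,1/2}$ — and then to transfer that regularity back to $\bar F$.

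\textbf{Step 1: normalization and blow-up.} After translating $x_0$ to the origin, rotating, and flattening the $C^{1,\alpha}$ portion $Z$ so that near $0$ it becomes $\{x_n=0\}$ with $\Omega$ contained in $\{x_n>0\}$ — the flattening producing an error of size $O(r^\alpha)$ at scale $r$, which is why $\alpha>1/2$ is needed — and rescaling so that $Q(0)=1$, I would study the blow-ups $u_r(x)=u(rx)/r$. The trivial case in which $\{u=0\}$ has empty interior near $0$ can be set aside: there $\bar F$ coincides with a piece of $Z$ and is already $C^{1,\alpha}$; so assume $0\in\bar F$ nontrivially. The Lipschitz estimate for one-phase solutions together with the nondegeneracy forced by $|Du|\geq Q>0$ on $\bar F\cap Z$ (which gives $u(x)\sim\mathrm{dist}(x,\partial\Omega^+)$ near $0$) makes the $u_r$ uniformly Lipschitz and nondegenerate, so a subsequence converges to a global solution $u_0\geq0$ of the same type with $u_0=0$ on $\{x_n=0\}$ and $\{u_0>0\}\subseteq\{x_n>0\}$. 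The classification of such global solutions — via a Weiss-type monotonicity formula making $u_0$ homogeneous of degree one, then the observation that $u_0=r\,\psi(\theta)$ with $\psi$ the first Dirichlet eigenfunction of $\{u_0>0\}\cap S^{n-1}$ (eigenvalue $n-1$), so that domain monotonicity of the first eigenvalue together with $\{u_0>0\}\cap S^{n-1}\subseteq S^{n-1}_+$ forces $\{u_0>0\}=\{x_n>0\}$, the free boundary condition then fixing the slope — yields $u_0=x_n^+$. Upgrading this to a quantitative, basepoint-uniform flatness statement and applying the one-phase $\varepsilon$-regularity theorem at interior free boundary points near $x_0$, I obtain that $F$ near $x_0$ is a $C^{1,\gamma}$ graph $x_n=g(x')$ with $g\geq0$, $g(0)=0$, $Dg(0)=0$, coincidence set $\{g=0\}$ equal to the projection of $\Lambda$, and $Du\neq0$ in a neighborhood of $\bar F$.

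\textbf{Step 2: reduction to Signorini.} Since $Du\neq0$ near $\bar F$ and $F$ is a $C^{1,\gamma}$ graph in $\Omega$, I would perform a partial Legendre (hodograph) change of variables on $\Omega^+$ near $x_0$, passing to coordinates $(x_1,\dots,x_{n-1},u)$ and taking as new unknown the function $X_n=X_n(x_1,\dots,x_{n-1},u)$ expressing $x_n$ in terms of them. This flattens $\bar F$ onto the hyperplane $\{u=0\}$ and carries $\Omega^+$ onto a small perturbation of the half-space $\{u>0\}$, with $X_n|_{\{u=0\}}=g$. In these coordinates: harmonicity of $u$ becomes a uniformly elliptic equation for $X_n$ with coefficients close to constant (controlled by $Q\in C^{0,1}$ and the $C^{1,\alpha}$ flattening of $Z$); the inclusion $\Omega^+\subseteq\{x_n>0\}$ becomes the thin obstacle constraint $X_n\geq0$ on $\{u=0\}$; the condition $|Du|=Q$ on $F$ becomes a prescribed oblique (Neumann-type) condition for $X_n$ on the noncoincidence set $\{X_n>0\}$; and $|Du|\geq Q$ on $\Lambda$ becomes the matching one-sided inequality on the coincidence set $\{X_n=0\}$. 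Together these are exactly the complementarity relations of a perturbed Signorini problem for $X_n$. (Equivalently, one can run a Savin-type improvement-of-flatness iteration for $u$ directly and check that the linearized problem appearing in the limit is this Signorini problem.)

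\textbf{Step 3: conclusion and the main obstacle.} The optimal regularity theory for the thin obstacle problem gives $X_n\in C^{1,1/2}$ up to $\{u=0\}$, and this persists under the Lipschitz, resp.\ $C^{0,\alpha}$ with $\alpha>1/2$, perturbations of coefficients and of the oblique datum present here. Hence $g=X_n|_{\{u=0\}}\in C^{1,1/2}$, so $\bar F$, being the graph of $g$ over a neighborhood of $x_0$ in $Z$, is a $C^{1,1/2}$ hypersurface, and tangent to $Z$ along $\Lambda$ because $Dg(0)=0$; performing this uniformly for $x_0$ in a neighborhood in $\Lambda$ gives the theorem. The hardest step, I expect, is Step 1: extracting from a mere viscosity solution (with only the one-sided bound $|Du|\geq Q$ on the contact set) genuine quantitative flatness and $C^{1,\gamma}$ regularity of $F$ with nonvanishing gradient — so that the hodograph transform is licit — which requires adapting Caffarelli's nondegeneracy and the one-phase $\varepsilon$-regularity theory to the presence of the fixed boundary $Z$. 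A secondary difficulty is carefully tracking the perturbation errors (from $Q$ nonconstant and $Z$ curved) so that the iteration closes at exactly the exponent $1/2$, which is the precise role of the hypothesis $\alpha>1/2$.
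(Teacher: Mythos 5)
Your overall plan agrees with the paper's first-order picture (the behavior of $\bar F$ near $\Lambda$ is governed by a Signorini-type problem, whence the $C^{1,1/2}$ exponent), and your parenthetical remark in Step~2 — that one could instead run an improvement-of-flatness iteration for $u$ whose linearization is Signorini — is in fact the route the paper takes. But as written the proposal has two genuine gaps.

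\emph{Step 1 is under-proved and partly circular.} Interior one-phase $\varepsilon$-regularity gives $C^{1,\gamma}$ estimates at free boundary points away from $Z$, with constants degenerating as one approaches $\partial'\Lambda$; it does not by itself give that $\bar F$ is a graph $x_n=g(x')$ with $g\in C^{1,\gamma}$ \emph{across} $\partial'\Lambda$, with $Dg(0)=0$ and with the coincidence set identified. That matching is exactly the boundary regularity one is trying to prove, and in the paper it occupies all of Section~3: a diminish-of-oscillation (Harnack) argument for $w=(x_n-u)/\e$ whose compactness limit solves Signorini, yielding $C^{1,\beta}$ up to $\Lambda$ for every $\beta<1/2$. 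Also, the Weiss monotonicity you invoke to classify blow-ups is a variational tool whose validity for viscosity solutions (as opposed to minimizers) is not automatic; the paper obtains $u_0=(Du(0)\cdot x)_+$ more directly from the exterior ball condition at $Z$ and a barrier-based nontangential expansion (Lemma~\ref{lem:lin_behavior}), which is available in the viscosity framework.

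\emph{Steps 2--3 via the hodograph are not closable with the tools you cite.} The partial Legendre transform turns harmonicity of $u$ into a quasilinear equation for $X_n$ whose coefficients depend on $DX_n$, and the Neumann data coming from $|Du|=Q$ is nonlinear in $DX_n$. Since Step~1 only provides $C^{1,\gamma}$ with a small $\gamma$, the transformed coefficients are merely $C^{0,\gamma}$, and an optimal $C^{1,1/2}$ Signorini theorem with $C^{0,\gamma}$ coefficients and nonlinear Neumann data is not a readily citable result (the variable-coefficient optimal regularity results require Lipschitz or at least $C^{0,\alpha}$-close-to-constant data, and here the coefficients inherit only the regularity of $Du$). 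The paper avoids the hodograph entirely: once Section~3 pushes $\beta$ arbitrarily close to $1/2$, it applies Almgren's truncated frequency formula directly to $w=x_n-u$ in the original coordinates, where the nonlinear free-boundary terms, the variable $Q$, and the curvature of $Z$ all enter the Rellich/Almgren computations as error terms of subcritical order $O(r^{2+\s/2})$. This is what delivers the sharp exponent without needing an optimal variable-coefficient Signorini theorem after a nonlinear change of variables.

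In short: right heuristic, same linearization target, but the passage through a hodograph transform and the assertion of $C^{1,\gamma}$ regularity of $\bar F$ up to $\Lambda$ from interior $\varepsilon$-regularity alone leave the two central technical steps of the paper (Sections~3 and~4) unaddressed.
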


Next we illustrate the main idea of Theorem \ref{thm:main1} by formally linearizing the one-phase problem near a point in $\L$. 

Assume for simplicity that $\W = B_1^+ = B_1\cap \{x_n>0\}$ and $Z = B_1' = B_1\cap \{x_n=0\}$, with $Q \equiv 1$ and say that $\bar F$ separates from $Z$ at the origin. As a first order approximation, we expect $$u = x_n + o(|x|).$$
Let $u = x_n - \e w$ in $\W^+$. Then the perturbation $w$ is harmonic, and moreover $w$ is nonnegative over $\bar F$. The free boundary condition over $F$ can be written as $|Du| = 1$ or, in terms of $w$, as
\begin{align*}
\p_n w = \frac{\e}{2}|Dw|^2 \text{ on } F.
\end{align*}
Additionally, $|Du| \geq 1$ on $\L$ means that
\[
\p_n w \leq 0 \text{ on } \L.
\]
As $\e\to 0$, we expect $\W^+\to B_1^+$, $\bar F \to B_1'$, and $w$ to solve
\begin{align}\label{SP}
\begin{cases}
\D w = 0 \text{ in } B_1^+,\\
w \geq 0 \text{ on } B_1',\\
\p_n w = 0 \text{ on } \{w>0\}\cap B_1',\\
\p_n w \leq 0 \text{ on } B_1'.
\end{cases}
\end{align}
These equations are known as the \textit{Signorini problem} or the \textit{thin obstacle problem}. Our main result states that $\bar F = \{x_n = \e w\}$ inherits the optimal regularity of the solution for the Signorini problem established by Athanasopoulos and Caffarelli in \cite{MR2120184}.

The regularity stated in our main result is optimal in terms of the regularity for $Q$. Consider in polar coordinates $u=(r\sin\theta-r^{3/2}\cos(3\theta/2))_+$ such that $F = \{\sin \theta = r^{1/2}\cos(3\theta/2)\}$ and over such set we get
\[
|Du|^2 = 1+9r/4-3\frac{\sin\theta\sin(\theta/2)}{\cos(3\theta/2)},
\]
From here we can extend $Q$ to a global Lipschitz function such that $u$ solves the corresponding one-phase problem on the upper half space.


\subsection{Previous results and overview of the paper}

For the one-phase problem, Alt and Caffarelli showed in \cite{MR618549} that $F$ is smooth outside of a set of $\mathcal H^{n-1}$ measure zero. Their proof is inspired by the regularity theory of minimal surfaces. The key estimate in \cite{MR618549} states that the free boundary is $C^{1,\a}$ regular provided a flatness hypothesis. A more general theory for two-phase problems was later developed by Caffarelli in \cite{Caffarelli1987a, MR973745, MR1029856} based on a viscosity solution approach.
 
Following the methods in \cite{Caffarelli1987a, MR973745, MR1029856}, several authors extended the results in different directions, for instance the case of variable coefficients with several types of regularity. At this point we would like to highlight one of the recent results due to De Silva, Ferrari, and Salsa \cite{MR3485136} as it will be relevant for our work. The main theorem in \cite{MR3485136} establishes that flat free boundaries are $C^{1,\alpha}$ regular in the case of divergence operators with H\"older continuous coefficients. The strategy is based in a compactness approach started by De Silva in \cite{MR2813524}.

The optimal regularity for the solution of the Signorini problem was first established by Athanasopoulos and Caffarelli in \cite{MR2120184}. The regularity of the free boundary around regular points was established by Athanasopoulos, Caffarelli and Salsa in \cite{MR2405165}. In this last reference the authors follow a blow-up procedure based on the monotonicity of the Almgren frequency formula which also plays an important role in our theorem.
 
We combine some of the recent strategies for the one-phase and the Signorini problem to prove our regularity result. In Section \ref{sec:alm_opt} we obtain the $C^{1,\b}$ regularity of $\p\W^+\cap B_1$ for every $\b\in(0,1/2)$ by following the compactness approach from \cite{MR2813524}. In Section \ref{sec:opt} we establish the monotonicity of an Almgren's type frequency formula in order to achieve the optimal $C^{1,1/2}$ regularity for $\p\W^+\cap B_1$. This section bears some similarities with work by Guill\'en \cite{MR2558329} and Garofalo, Smith Vega Garcia \cite{Garofalo2014} for the Signorini problem with variable coefficients.

\section{Preliminaries}\label{sec:prelim}

In this section we state the notion of solutions for the one-phase problem in the viscosity sense. A change of variables allows us to reformulate the problem over a convenient geometry. As a trade off we need consider operators with variable coefficients.

Let $a^{ij}$ symmetric and uniformly elliptic with respect to some fixed $\l>0$
\[
 \l|\xi|^2 \geq a^{ij}(x)\xi_i\xi_j \geq \l^{-1}|\xi|^2.
\]
We denote
\[
Lu = \p_i(a^{ij}(x)\p_j u) \qquad\text{ and }\qquad |D_a u| = \sqrt{a^{ij}(x)\p_i u\p_j u}.
\]
We say that $u$ is \textit{$L$-superharmonic ($L$-subharmonic or $L$-harmonic)} if $Lu \leq (\geq \text{ or } =) \ 0$ holds in the weak sense.

From now on we fix $Q$ continuous such that
\[
\text{$Q_{min}\leq Q(x) \leq Q_{max}$ for some fixed $0<Q_{min}\leq Q_{max}$}.
\]

Let $\varphi \in C(B_r(x_0))$ be nonnegative and $L$-superharmonic over $\W^+ = \{\varphi>0\} \cap B_r(x_0)$. We call $\varphi$ a \textit{strict comparison supersolution (subsolution) of the one-phase problem} if $\varphi \in C^1(\overline{\W^+})$ and
\[
|D_a\varphi| < (>) \ Q(x) \text{ in } \p\W^+\cap B_r(x_0).
\] 

Given $u,\varphi \in C(S)$, we say that \textit{$\varphi$ touches $u$ from above (below) at $x_0\in S$} if
\[
u(x_0)=\varphi(x_0) \text{ and } u \leq (\geq) \ \varphi \text{ in $S$}.
\]

Let $u \in C(\W)$ be nonnegative and $L$-subharmonic in $\W^+=\{u>0\}\cap \W$. We call $u$ a \textit{viscosity subsolution (supersolution) of the one-phase problem}
\begin{align}
\label{eq:vsol}
\begin{cases}
Lu = 0 \text{ in } \W^+ = \{u>0\}\cap \W,\\
|D_au| = Q(x) \text{ on } F = \p\W^+\cap \W,
\end{cases}
\end{align}
if there is no strict comparison supersolution (subsolution) that touches $u$ from above (below). We call $u$ is a \textit{viscosity solution} of \eqref{eq:vsol} if $u$ is simultaneously a subsolution and a supersolution.

Next we define the notion of viscosity solution up to the boundary. We assume for simplicity that $\Omega$ is a Lipschitz domain and $Z \ss \p \W$ is a relatively open set in $\p \W$ which is locally a $C^{1,\a}$ hypersurface. As in the introduction we denote by $\bar F$ the extension of $F$ to $\bar \Omega$,
$$\bar F = \p\W^+\cap \{u=0\}.$$
Following the terminology of the obstacle problem, we define the \textit{contact set} as
\[
\L = \bar F \cap Z.
\]
Finally we denote by $\p'\L$ the (thin) boundary of $\L$ relative to $Z$.

\begin{definition}\label{def:vis}
Let $u \in C(\bar \W)$ be nonnegative which vanishes on $Z$. We say that $u$ is a viscosity solution of the one-phase problem up to $Z$
\begin{align}
\label{eq:def}
\begin{cases}
Lu = 0 \text{ in } \W^+,\\
u = 0 \text{ on } Z,\\
|D_au| = Q(x) \text{ on } F,\\
|D_au| \geq Q(x) \text{ on } \L,
\end{cases}
\end{align}
if it is a viscosity solution of the one-phase problem on $\W$ and the last inequality is satisfied in the viscosity sense, i.e. $u$ cannot be touched from above at $x_0\in\L$ by a $C^1$ function $\varphi$ with $|D_a\varphi(x_0)| < Q(x_0)$.
\end{definition}

\begin{remark}\label{rmk:vis}
An equivalent definition can be given by extending $\W$ to some domain $U \supset \W$ such that $Z=\p \W \cap U$. Then $u \in C(\bar U)$ nonnegative which vanishes on $U \sm \W$ is a solution of \eqref{eq:def} if is a solution of the one-phase problem on $\W$ and a subsolution on $U$.
\end{remark}

\subsection{Existence}

Variational solutions for the one-phase problem can be constructed as minimizers of the following functional with $g \in H^1(\W)$ nonnegative
\[
J(u) = \int_\W  a^{ij}\p_iu\p_ju + Q^2\chi_{\{u>0\}} \, \, dx \qquad\text{ over }\qquad K = \{u\in H^1(\W): u-g \in H^1_0(\W)\}.
\]
In \cite{MR618549} it was shown that such minimizers of $J$ are viscosity solutions of the one-phase problem. We remarked in the introduction that they also satisfy Definition \ref{def:vis}.

\begin{lemma}\label{lem:perrons}
Let $g=0$ on $Z$. A minimizer of $J$ is a viscosity solution of \eqref{eq:def}.
\end{lemma}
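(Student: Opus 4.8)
The plan is to verify the two conditions in Definition \ref{def:vis} for a minimizer $u$ of $J$. First I would recall the standard facts about minimizers established in the spirit of \cite{MR618549}: a minimizer $u$ is nonnegative, $L$-harmonic in $\W^+$, locally Lipschitz, and nondegenerate, meaning $u(x) \gtrsim \operatorname{dist}(x,\bar F)$ near the free boundary; moreover $u$ is a viscosity solution of the one-phase problem \eqref{eq:vsol} in the interior $\W$. The only genuinely new point is the inequality $|D_au| \ge Q$ on $\L = \bar F \cap Z$ in the viscosity sense, so the bulk of the argument is local near a point $x_0 \in \L$.

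Next I would argue by contradiction using the equivalent formulation from Remark \ref{rmk:vis}: extend $\W$ to $U$ with $Z = \p\W \cap U$ and extend $u$ by $0$ on $U \sm \W$; it suffices to show this extension is a subsolution of the one-phase problem on all of $U$. Suppose instead that some $C^1$ function $\varphi$ touches $u$ from above at $x_0 \in \L$ with $|D_a\varphi(x_0)| < Q(x_0)$. By smoothing $\varphi$ slightly we may assume $\varphi$ is a strict comparison subsolution in a small ball $B_r(x_0)$, with $\{\varphi > 0\} \cap B_r(x_0)$ lying (mostly) on the $\W$-side of $Z$, since $u \equiv 0$ on the other side. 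Because $Z$ is a $C^{1,\a}$ hypersurface and $u = g = 0$ on $Z$, near $x_0$ the positivity set $\W^+$ is contained in $\{\varphi > 0\}$ up to the boundary, and $\varphi$ is (super)harmonic where positive. The idea is then to perturb $\varphi$ upward to $\varphi_t = \varphi + t\psi$ for a suitable fixed bump $\psi$ supported in $B_r(x_0)$ so that $\varphi_t$ detaches from $u$ in the interior, still satisfies $|D_a\varphi_t| < Q$ on its free boundary, and satisfies $\varphi_t \ge u$ on $\p B_r(x_0)$ — this is exactly the one-sided comparison setup from \cite{MR618549}. Sliding back down, at the first contact parameter one obtains a strict comparison subsolution touching $u$ from below, contradicting that $u$ is a viscosity supersolution of \eqref{eq:vsol} in the interior; alternatively, and more directly, one compares the energies: replacing $u$ by $\min(u,\varphi_t)$ strictly lowers $J$ because $\varphi_t$ has a smaller gradient than $Q$ near its free boundary, contradicting minimality.

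The main obstacle is handling the behavior at the fixed boundary $Z$ itself: one must make sure the competitor $\min(u,\varphi_t)$ (or the comparison surface) is legitimate right up to $Z$, using that $u$ vanishes on $Z$ together with the $C^{1,\a}$ regularity of $Z$ to control the geometry of $\W^+$ near $x_0$, and to guarantee the energy comparison on the piece of $B_r(x_0)$ that straddles $Z$ genuinely produces a gain. A clean way to package this is precisely Remark \ref{rmk:vis}: reduce to showing the zero-extension is a subsolution on $U$, so that $Z$ becomes an interior hypersurface and the standard interior comparison machinery of \cite{MR618549} applies verbatim, with the strict inequality $|D_a\varphi(x_0)| < Q(x_0)$ providing the required energy decrease. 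This reduces the lemma to the known interior theory and the elementary observation that $u$ extended by zero across $Z$ remains $L$-subharmonic, which follows since $u \ge 0$ and $u = 0$ on $Z$.
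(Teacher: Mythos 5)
Your overall strategy --- argue by contradiction with a test function $\varphi$ touching $u$ from above at $x_0 \in \L$ with $|D_a\varphi(x_0)| < Q(x_0)$, and produce an energy-decreasing competitor --- matches the paper's, which invokes an inward deformation of $\W^+$ as in \cite[Lemma 9]{MR1029856} and observes that the same deformation decreases $J$. The energy comparison you sketch in the ``alternatively, and more directly'' sentence is essentially this idea.

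However, the execution has a direction error that trivializes your competitor. You perturb $\varphi$ \emph{upward} to $\varphi_t=\varphi+t\psi$; since $\varphi$ already lies above $u$ near $x_0$, one then has $\varphi_t \ge \varphi \ge u$, hence $\min(u,\varphi_t)=u$, and no energy is saved. What is needed is the \emph{downward} perturbation (equivalently, the inward deformation of $\W^+$ that the paper cites): replace $\varphi$ by $\varphi - t\psi$, or push $\p\W^+$ inward near $x_0$, so that the positivity set of the competitor genuinely shrinks near $x_0$; the gain in the bulk term $\int Q^2\chi_{\{\,\cdot\,>0\}}$ then dominates precisely because $|D_a\varphi|<Q$ at the contact. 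Relatedly, the sentence ``sliding back down \dots one obtains a strict comparison \emph{sub}solution touching $u$ from \emph{below}'' is reversed: a $C^1$ barrier with $|D_a\varphi|<Q$ touching $u$ from above is a strict comparison \emph{super}solution touching from \emph{above}, and without a further argument the first contact as you slide may remain on $\L$ rather than migrate to an interior free boundary point where the already-known viscosity property could be applied. Finally, the claim that the interior machinery of \cite{MR618549} ``applies verbatim'' after zero-extension glosses over the fact that the extension is \emph{not} a minimizer on the enlarged domain $U$; one still must use the minimality of $u$ on $\W$ directly against a competitor supported in $B_r(x_0)\cap\W$, exactly as in the deformation/energy argument above.
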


Caffarelli developed in \cite{MR1029856} the Perron's method for viscosity solutions of a family of free boundary problems, including the one-phase problem. The idea is to construct a \textit{minimal viscosity solution} as the infimum over a family of \textit{admissible supersolutions} above a given \textit{subsolution minorant}. The main Theorem in \cite{MR1029856} states that the \textit{minimal viscosity solution} is a viscosity solution. We refer to \cite{MR1029856} for the precise definitions.

\begin{lemma}
Let $g=0$ on $Z$. The minimal viscosity solution above a subsolution minorant that vanishes over $Z$ is a viscosity solution of \eqref{eq:def}.
\end{lemma}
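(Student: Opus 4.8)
Most of the statement is covered by Caffarelli's theorem in \cite{MR1029856}: the minimal viscosity solution $u$ is a viscosity solution of the one-phase problem \eqref{eq:vsol} on $\W$, and since, by construction, $u$ attains the boundary datum $g$, which vanishes on $Z$, we have $u=0$ on $Z$. Thus three of the four lines of \eqref{eq:def} are already in place, and the plan is to verify the remaining one: in the sense of Definition \ref{def:vis}, that $u$ cannot be touched from above at a point $x_0\in\L$ by a $C^1$ function $\varphi$ with $|D_a\varphi(x_0)|<Q(x_0)$. (One may equally argue through Remark \ref{rmk:vis}, showing that the zero extension of $u$ to a collar of $Z$ is a subsolution there; the two routes amount to the same thing.)

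So suppose such $\varphi$ and $x_0$ exist. Since $x_0\in\L\ss Z$ and $u$ vanishes on $Z$, we have $\varphi(x_0)=u(x_0)=0$, $\varphi\ge u\ge 0$ near $x_0$ in $\bar\W$, and $\W^+$ accumulates at $x_0$ because $x_0\in\bar F$. Shrinking to a ball $B=B_r(x_0)$ we may assume $|D_a\varphi|<Q$ on $\bar B\cap\bar\W$. The strict inequality says that $\varphi$, and therefore $u$ underneath it, is ``too flat'' at $x_0$ to be compatible with the minimality of $u$, and the plan is to exploit this slack.

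The decisive step is to produce, from $\varphi$ and the gap $Q(x_0)-|D_a\varphi(x_0)|>0$, a strict comparison supersolution $\tilde\varphi$ of the one-phase problem in $B$ which coincides with $u$ on $\p B\cap\bar\W$ (so that $\min(u,\tilde\varphi)$ glues continuously with $u$ across $\p B$), stays nonnegative and dominates the subsolution minorant $\phi_0$ throughout $B$ (possible since $\phi_0$ vanishes on $Z$, hence is dominated there by any nonnegative $\tilde\varphi$, and a comparison on $B$ propagates this inward), yet satisfies $\tilde\varphi<u$ at some interior point of $\W^+$. Once such $\tilde\varphi$ is available, $v:=\min(u,\tilde\varphi)$ in $B\cap\W$, extended by $u$ on $\W\sm B$, is again an admissible supersolution lying above $\phi_0$, with $v<u$ somewhere, contradicting the minimality of $u$. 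This forces the nonexistence of $\varphi$, so $|D_au|\ge Q$ on $\L$ in the viscosity sense and $u$ solves \eqref{eq:def}.

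I expect the construction of $\tilde\varphi$ near the fixed boundary $Z$ to be the only genuinely delicate point: one must bend the free boundary of $\varphi$ slightly into $\W$ (or otherwise perturb $\varphi$ downward) while keeping $\tilde\varphi$ nonnegative, $L$-superharmonic where positive, and with $|D_a\tilde\varphi|<Q$ preserved on the new free boundary, and while keeping control of $\tilde\varphi$ both along $\p B$ and against $\phi_0$. This is a local barrier construction of the type used repeatedly in the one-phase theory; with it in hand the remainder is just the standard Perron minimality comparison and requires no new free-boundary estimates.
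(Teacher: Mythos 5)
Your proposal follows the same contradiction argument as the paper: assume a $C^1$ test function $\varphi$ touches $u$ from above at $x_0 \in \L$ with $|D_a\varphi(x_0)| < Q(x_0)$, perform a local inward deformation of $\W^+$ to manufacture an admissible supersolution lying strictly below $u$ somewhere, and contradict the minimality of $u$ in Caffarelli's Perron framework. The barrier construction you correctly flag as the delicate step is exactly what the paper outsources to \cite[Lemma 9]{MR1029856}, so the two arguments coincide.
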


The proof of both lemmas can be achieved in similar ways by a contradiction argument. For instance, if there is a test function $\varphi$ touching $u$ from above at some $x_0 \in \L$ such that $|D_a\varphi(x_0)| < Q(x_0)$, then by applying a inward deformation of $\W^+$ as in \cite[Lemma 9]{MR1029856} we obtain an admissible supersolution smaller than $u$. The same deformation decreases the energy $J$.

From now on we assume that 
$$a^{ij},Q\in C^\a, \mbox{ and $Z$ is $C^{1,\a}$ regular},$$ 
and we focus our attention in a neighborhood of a point $x_0 \in \L$. After a domain deformation, we may reduce our analysis to the case
$$\W=B_1^+, \quad Z=B_1', \quad 0 \in \L.$$ 
In this case we will frequently consider $u$ to be defined over $B_1$ such that $u$ vanishes over $B_1\sm B_1^+$ and it is a subsolution of the one-phase problem over $B_1$, see Remark \ref{rmk:vis}.

\subsection{Lipschitz regularity and flatness of $u$}

Let $u \in C(B^+_1)$, nonnegative, harmonic in $\W^+ =\{u>0\}\cap B_1$, with $0\in\p\W^+$. If $\W^+$ satisfies either the interior or exterior ball condition at $0$, then a barrier argument shows that $u$ has a linear asymptotic behavior at $0$. From this observation we get to define the non-tangential gradient $Du(0)$ such that
\[
u(x) = (Du(0)\cdot x)_+ + o(|x|) \text{ as $x\to0$ non-tangentially in $\W^+$,}
\]
See \cite[Chapter 11]{MR2145284}. The same result can be reproduced for $L$ with $a^{ij}\in C^\a$ thanks to the Schauder estimates.

In the case of $u$ being a solution of \eqref{eq:def} with $\W=B^+_1$, $Z = B'_1$, we get that all points in $\L$ are regular from outside and it is then possible to construct barriers to bound $|Du|$ in terms of $\|u\|_{L^\infty(B^+_1)}$. This ultimately implies the Lipschitz regularity of the solution up to $\L$.

\begin{lemma}\label{lem:lipschitz_reg} Let $u$ a viscosity solution of \eqref{eq:def} with $\W=B^+_1$, $Z =B'_1$. Then 
\begin{align*}
\|u\|_{C^{0,1}\1B^+_{1/2}\2} \leq C\11+\|u\|_{L^\infty\1B^+_1\2}\2.
\end{align*}
\end{lemma}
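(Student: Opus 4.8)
The plan is to establish a one-sided (from above) linear bound for $u$ at every point of $\overline{F}\cap B^+_{3/4}$, and then upgrade this to an interior Lipschitz estimate by a standard harmonic-estimate argument in $\W^+$. The key point is that, since $\W=B_1^+$ and $Z=B_1'$, every point $x_0 \in \L$ lies on the flat piece $\{x_n=0\}$ and the positivity set $\W^+$ automatically satisfies an exterior ball condition there (the ball sitting in $\{x_n<0\}$); while at interior free boundary points $x_0 \in F\cap B^+_{3/4}$ one exploits the exterior ball condition coming from a point slightly below, or, more robustly, one argues directly with the supersolution property. Concretely, I would first show: there is $C=C(n,\l,\a,Q_{max})$ such that for every $x_0\in\overline F\cap B^+_{3/4}$ and every $r<1/8$,
\[
\sup_{B_r(x_0)\cap \W^+} u \;\le\; C\1 1+\|u\|_{L^\infty(B^+_1)}\2\, r.
\]

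To prove this growth bound I would use the viscosity subsolution property together with an explicit barrier. Fix $x_0\in \overline F\cap B^+_{3/4}$. Consider in an annulus $B_R(y_0)\setminus B_\rho(y_0)$ (with $y_0$ chosen so that $x_0\in\partial B_\rho(y_0)$ and $B_\rho(y_0)$ avoids $\W^+$ — using the exterior ball at $x_0$ as discussed above) the $L$-harmonic function $\psi$ equal to $M:=\|u\|_{L^\infty(B^+_1)}$ on the outer sphere and $0$ on $B_\rho(y_0)$. By Schauder/Hopf estimates for $L$ with $a^{ij}\in C^\a$, $\psi$ has a well-defined inward normal derivative at $x_0$ comparable to $M/\rho$ with a dimensional constant, so after scaling $\psi$ by a factor slightly larger than $1$ we obtain, for $\rho$ bounded below, a function $\varphi := c M\, \psi$ with $|D_a\varphi(x_0)| > Q(x_0)$, i.e. a strict comparison subsolution; since $\varphi$ also dominates $u$ on the outer sphere (by choice of $M$) and on $B_\rho(y_0)\cap\W^+$ (where $u=0$ there or $u$ is small), $\varphi$ would touch $u$ from below at some point unless $u\le \varphi$ throughout, and by the definition of viscosity solution $\varphi$ cannot be touched from below with strict gradient deficit — running this comparison at every scale yields the claimed linear growth $u(x)\le C(1+M)|x-x_0|$ near $\overline F$. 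Away from $\overline F$, inside $\W^+$, $u$ is $L$-harmonic, so interior gradient estimates give $|Du(x)|\le C\, \mathrm{dist}(x,\partial\W^+)^{-1}\sup_{B_{\mathrm{dist}/2}(x)}u$, and combining this with the growth bound above gives $|Du(x)|\le C(1+M)$ at every interior point of $\W^+\cap B^+_{1/2}$. A routine argument (integrating along paths, using continuity of $u$ up to $\overline F$ and $u|_Z=0$) then converts the pointwise gradient bound into the stated $C^{0,1}$ bound on $\overline{B^+_{1/2}}$.

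The main obstacle, and the place where the hypotheses of \eqref{eq:def} are genuinely used, is producing the strict comparison subsolution with the \emph{correct} normal derivative at free boundary points: one must verify that the barrier $\varphi$ can simultaneously (i) lie below $u$ on the relevant part of the boundary of the annulus, (ii) be an $L$-subharmonic (hence admissible) competitor in $\W^+$, and (iii) have $|D_a\varphi(x_0)|>Q(x_0)$. This forces a careful choice of the inner radius $\rho$ — it must stay bounded below, which is exactly where flatness of $Z$ and the reduction $\W=B_1^+$ help, since the exterior ball at points of $\L$ has radius bounded below, and near interior free boundary points one can use that $u$ is already known to be continuous and to vanish on $\overline F$. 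The $C^\a$ regularity of $a^{ij}$ enters only through the Schauder boundary estimate guaranteeing the barrier has a genuine linear profile (a nondegenerate Hopf-type lower bound on its normal derivative), with constants depending on $\l$, $\a$, $n$; the dependence of the final constant on $Q_{max}$ comes from the requirement $|D_a\varphi(x_0)|>Q(x_0)$, since the amplitude of $\varphi$ must be taken proportional to $\max(1,M)$ with a factor scaled by $Q_{max}$. Once the growth bound near $\overline F$ is in hand, the passage to the Lipschitz estimate is entirely standard and I would not dwell on it.
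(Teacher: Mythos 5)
Your high-level plan --- a linear growth bound near $\bar F$, then interior gradient estimates for $L$-harmonic functions, then integration --- is the same plan the paper sketches before the lemma (no detailed proof is given there). However, the barrier step is implemented incorrectly at two places, and one of them is a genuine gap.

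At $x_0 \in \L$ you do not need, and do not correctly apply, the viscosity free-boundary comparison. The barrier $\varphi := cM\psi$ has large normal derivative ($\sim M/\rho$) at the inner sphere, and you call it a strict comparison \emph{sub}solution ($|D_a\varphi|>Q$). But to obtain an \emph{upper} bound $u\le\varphi$ one slides a strict \emph{super}solution ($|D_a\varphi|<Q$) from above; sliding a strict subsolution from below produces a lower bound on $u$, not the claimed conclusion, and your phrase ``cannot be touched from below with strict gradient deficit'' has the sign of the gradient inequality backwards. Fortunately none of this machinery is needed at $\L$: since $\W=B_1^+$, every $x_0\in\L$ has the exterior ball $B_\rho(x_0-\rho e_n)\subset\{x_n<0\}\subset\{u=0\}$ for all $\rho<1/4$, and $u$ extended by zero is $L$-subharmonic in $B_1$ (Remark~\ref{rmk:vis}); the ordinary maximum principle in the annulus then yields $u \le CM\,|x-x_0|$ directly, with no reference to $Q$ or to the free boundary condition. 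Invoking the viscosity definition at $\L$ is both superfluous and, as written, wrong.

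The real gap is at interior free boundary points $x_0\in F$. The claim that such points satisfy ``an exterior ball condition coming from a point slightly below'' is unjustified: nothing guarantees that $\{u=0\}$ contains a ball near an interior point of $F$. At those points the linear growth from above genuinely uses the viscosity subsolution property: one touches $u$ from above by strict comparison supersolutions built from radial $L$-harmonic barriers, which is the Alt--Caffarelli interior Lipschitz estimate and gives $\sup_{B_r(x_0)\cap\W^+} u \le C\,Q_{max}\,r$ with constant independent of $M$. This is a qualitatively different argument from the exterior-ball one and must be carried out or cited separately. With these two repairs --- exterior ball plus maximum principle at $\L$, the Alt--Caffarelli estimate at interior $F$ --- the remaining steps of your outline are standard and do yield the stated bound $C(1+\|u\|_{L^\infty})$.
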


On the other hand, the slope $|Du(0)|$ is bounded from below by $Q_{min}>0$. The asymptotic expansion and the Lipschitz regularity allows us deduce the following flatness result.

\begin{lemma}\label{lem:lin_behavior}
Let $u$ a viscosity solution of \eqref{eq:def} with $\W=B_1$, $Z = B'_1$ such that $0 \in \L$. Then, given $\e>0$ there exists $\d>0$ such that
\[
(|Du(0)|+\e)x_n \geq u \geq |Du(0)|(x_n-\e\d)_+ \text{ in } B_\d^+.
\]
\end{lemma}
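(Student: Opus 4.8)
The plan is to combine the non-tangential asymptotic expansion recorded above with the Lipschitz bound of Lemma \ref{lem:lipschitz_reg} and a compactness/normal-family argument. Write $q = |Du(0)| \ge Q_{min} > 0$, which is well defined since $0 \in \L$ is regular from outside (the exterior ball condition holds because $Z = B_1'$ is smooth). The upper bound is the easier half: since $u$ vanishes on $B_1'$ and is subharmonic across $B_1'$ in the sense of Remark \ref{rmk:vis}, the even reflection is $L$-subharmonic, so $u/x_n$ is controlled near $0$; more directly, the expansion $u(x) = q\,x_n + o(|x|)$ as $x \to 0$ non-tangentially, upgraded to a full (not merely non-tangential) statement using that $\{u>0\}$ contains a cone around $e_n$ together with the Lipschitz estimate, gives $u(x) \le (q+\e)x_n$ in a small half-ball $B_\d^+$ once $\d$ is chosen small enough that the $o(|x|)$ error is below $\e x_n$ on the relevant cone and below $\e\d$-type slack elsewhere.

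For the lower bound I would argue by contradiction and compactness. Suppose the lower inequality fails for some fixed $\e>0$: then there is a sequence of solutions $u_k$ (or rescalings of a single solution) with $0 \in \L$, $|Du_k(0)| =: q_k$, and points where $u_k < q_k(x_n - \e\d_k)_+$ with $\d_k \to 0$. Rescale $u_k^{(r)}(x) = u_k(rx)/r$; by Lemma \ref{lem:lipschitz_reg} these are uniformly Lipschitz, hence (along a subsequence) converge locally uniformly to a global limit $u_\infty$ which is nonnegative, vanishes on $\{x_n=0\}$, harmonic in $\{u_\infty>0\}$, and has $|Du_\infty(0)| = q_\infty \ge Q_{min}$. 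The non-tangential expansion passes to the limit and forces $u_\infty = q_\infty x_n$ in the upper half-space: indeed the expansion gives $u_\infty = q_\infty x_n + o(|x|)$ at $0$, and a standard blow-down/monotonicity (or direct barrier) argument shows the limit of such a flat, nondegenerate solution must be the half-plane solution. But then for $k$ large the flatness of $u_k^{(r)}$ near the origin contradicts the assumed failure of the lower bound, since $q_k x_n(1 - \e\d_k/x_n)$ differs from $q_\infty x_n$ by less than the gap.

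The main obstacle is the passage to the limit in the free-boundary condition and, relatedly, the justification that the blow-up $u_\infty$ is exactly the half-plane solution rather than merely a flat subsolution — one must rule out that $\{u_\infty = 0\}$ has nonempty interior in $\{x_n>0\}$ or that $u_\infty$ picks up an extra harmonic correction. This is where the nondegeneracy $|D_a u| \ge Q_{min}$ on $\L$ (viscosity sense) together with the one-sided viscosity inequalities must be used carefully: a test-function/barrier comparison at an interior free-boundary point of the limit, or an application of the strong maximum principle to $q_\infty x_n - u_\infty \ge 0$ in $\{x_n>0\}$, closes this gap. Once the blow-up is identified, the contradiction and hence the quantitative $\d$–$\e$ statement follow immediately.
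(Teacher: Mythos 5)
The paper gives no detailed proof of this lemma — it simply asserts that the statement follows by combining the non-tangential asymptotic expansion $u(x)=(Du(0)\cdot x)_++o(|x|)$ with Lemma \ref{lem:lipschitz_reg} — so there is no written argument to compare against line by line. Your strategy (compactness/blow-up, using both ingredients) is of the right kind, but as written it has two genuine gaps.

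The upper bound does not close. You claim that choosing $\d$ so that the $o(|x|)$ error is ``below $\e x_n$ on the relevant cone and below $\e\d$-type slack elsewhere'' gives $u\le(q+\e)x_n$. But in the tangential region $\{x_n\ll|x|\}$ an additive error of size $\e\d$ yields only $u\le q x_n+\e\d$, which is much weaker than $(q+\e)x_n$ when $x_n$ is small; the Lipschitz bound alone gives $u\le Lx_n$, and in general $L>q+\e$. The estimate $u\le(q+\e)x_n$ is a \emph{tangential} estimate and requires a separate mechanism. One way: since $u$ extends to an $L$-subharmonic function in $B_1$ vanishing on $B_1'$, let $h$ be its harmonic majorant in a small half-ball $B_\r^+$ with $h=u$ on $\p B_\r^+$. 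Then $u\le h$, $h=0$ on $B_\r'$, and $\p_n h(0)=\frac{2}{|B_\r|\r}\int_{\p B_\r^+}u\,y_n\,ds$; splitting this into non-tangential and tangential parts (the former $\to q$ by the expansion, the latter $=O(c^2L)$ by Lipschitz, $c$ being the cone opening) gives $\p_n h(0)\le q+\e/2$ for $c,\r$ small. Boundary Schauder for the odd reflection of $h$ then gives $h(x)\le(\p_n h(0)+CL|x|/\r)x_n$, hence $u\le(q+\e)x_n$ in $B_\d^+$ with $\d=c\e\r/L$. Something to this effect is needed; your sketch skips it.

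For the lower bound, the blow-up strategy is sound, but the step you correctly flag as ``the main obstacle'' — identifying $u_\8$ as the half-plane solution — is not completed by the tools you invoke. Applying the strong maximum principle to $q_\8 x_n-u_\8\ge0$ presupposes the inequality you are trying to establish, and a ``flat nondegenerate classification'' is not needed. The direct route is: the expansion along the axis gives $u_\8(te_n)=q_\8 t>0$; Lipschitz continuity then forces $u_\8>0$ on some fixed cone $C$, so the rescaled expansion applies non-vacuously there and yields $u_\8=q_\8 x_n$ on $C$; by unique continuation of harmonic functions, $u_\8=q_\8 x_n$ on the connected component of $\{u_\8>0\}$ containing $C$, and since $q_\8 x_n>0$ on all of $\{x_n>0\}$ that component is the whole upper half-space, so $u_\8=q_\8(x_n)_+$. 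This also supplies, via uniform convergence, the geometric fact $\{x_n>\e\d\}\cap B_\d\ss\W^+$ that the lower bound implicitly encodes and which you should not take for granted.
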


\subsection{Interior regularity of flat free boundaries}

Finally we would like to recall one of the main results proved by Alt and Caffarelli in \cite{MR618549}: sufficiently flat free boundaries of the one-phase problem are $C^{1,\b}$. See also the recent results by De Silva, Ferrari, and Salsa for (two-phase) problems with divergence operators \cite{MR3485136}.

In the following we suppose $Q\in C^{0,1}$, and $a^{ij}\in C^\a$ such that for some $\e>0$,
\begin{align*}
a^{ij}(0) = \d^{ij}, \qquad Q(0)=1, \qquad \|a^{ij}-\d^{ij}\|_{C^{\a}(B_1)}+\|Q-1\|_{C^{0,1}(B_1)}\leq \e^2. 
\end{align*}
We assume $u \in C(B_1)$ to be a viscosity solution of
\begin{align*}
\begin{cases}
Lu = 0 \text{ in } \W^+ = \{u>0\}\cap B_1,\\
|D_au| = Q(x) \text{ on } F = \p\W^+\cap B_1,
\end{cases}
\end{align*}
such that
\[
B_1 \cap \{x_n>-\e\} \supseteq \W^+ \supseteq B_1 \cap \{x_n>\e\}.
\]

\begin{theorem}[DFS]\label{thm:classic}
For any $\b\in (0,1)$, there exists $\e_0\in(0,1)$ such that if $\e\in (0,\e_0)$ then $F$ gets parametrized by a $C^{1,\b}$ function in $B_{1/2}'$
\[
F\cap B_{1/2} = \{x_n = \e\bar u(x'): x' \in B_{1/2}'\},
\]
with the following estimate for some universal $C>0$,
\[
\|\bar u\|_{C^{1,\b}(B_{1/2}')} \leq C.
\]
\end{theorem}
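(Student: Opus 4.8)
The plan is to run the compactness (improvement-of-flatness) scheme of De Silva \cite{MR2813524} in the divergence-form, variable-coefficient version of De Silva--Ferrari--Salsa \cite{MR3485136}; the statement is in fact essentially their theorem, and I would adapt their proof. The core step is an \emph{improvement of flatness} lemma: given $\b<1$ there are $\rho=\rho(\b)\in(0,1/2)$, $\bar\e=\bar\e(\b)>0$ and a universal $C$ so that if $u$ is a viscosity solution as in the statement, $\e\le\bar\e$, and
\[
(x_n-\e)_+ \le u \le (x_n+\e)_+ \text{ in } B_1, \qquad \|a^{ij}-\d^{ij}\|_{C^\a(B_1)}+\|Q-1\|_{C^{0,1}(B_1)} \le \e^2,
\]
then there is a unit vector $\nu$ with $|\nu-e_n|\le C\e$ and $(x\cdot\nu-\rho^{1+\b}\e)_+\le u\le(x\cdot\nu+\rho^{1+\b}\e)_+$ in $B_\rho$. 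Once this is available I would rescale $\tilde u(x):=u(\rho x)/\rho$, which again satisfies the hypotheses in $B_1$ — now with flatness $\rho^\b\e$ and, after a rotation, with $a^{ij}(0)=\d^{ij}$ — and iterate: after $k$ steps $u$ is $\rho^{k\b}\e$-flat in a direction $\nu_k$ with $|\nu_{k+1}-\nu_k|\le C\rho^{k\b}\e$, so (as $\rho^\b<1$) the $\nu_k$ converge geometrically to a limit $\nu_\infty(x_0)$ with $|\nu_k-\nu_\infty|\le C\rho^{k\b}\e$. This puts $F$, near each free boundary point $x_0\in B_{1/2}$, inside a slab of width $Cr^{1+\b}\e$ around $\{(x-x_0)\cdot\nu_\infty(x_0)=0\}$ at every scale $r=\rho^k$; a standard argument then turns this into a $C^{1,\b}$ graph parametrization of $F\cap B_{1/2}$ with the stated estimate and normal $x_0\mapsto\nu_\infty(x_0)$.

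I would prove the improvement of flatness by compactness and contradiction. If it fails, there are $\e_k\to0$ and solutions $u_k$ (with data $a_k^{ij},Q_k$) satisfying the hypotheses but with the conclusion false for every admissible $\nu$; the objects to study are the rescalings $\tilde u_k:=(u_k-x_n)/\e_k$ on $\overline{\W_k^+}\cap B_{1/2}$. The essential tool is a \emph{Harnack inequality for flat solutions}: if $(x_n+c_1r)_+\le u\le(x_n+c_2r)_+$ in $B_r(z)\ss B_1$ with $c_2-c_1$ small, then in $B_{r/20}(z)$ the slopes improve to $c_1\le c_1'\le c_2'\le c_2$ with $c_2'-c_1'\le(1-\tau)(c_2-c_1)$, up to an additive error controlled by the oscillation of $a^{ij}$ and $Q$ on $B_r(z)$; I would obtain this from the interior Harnack inequality for $L$-harmonic functions applied to $u-x_n$ in the bulk, together with barrier comparisons at $F$ using $|D_au|=Q\approx1$. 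Iterating it over dyadic balls gives uniform oscillation and uniform Hölder bounds for the $\tilde u_k$ up to $F_k$; since also $\W_k^+\to B_{1/2}^+$ and $F_k\to B_{1/2}'$ in the Hausdorff sense, a subsequence of the graphs of $\tilde u_k$ converges locally uniformly on $\overline{B_{1/2}^+}$ to the graph of a Hölder function $\tilde u$. Passing to the limit in the equation and in the free boundary condition — using $a_k^{ij}\to\d^{ij}$, $Q_k\to1$, and the fact that the errors are $O(\e_k)$ after normalizing by $\e_k$ thanks to the $C^\a$, $C^{0,1}$ control — I expect $\tilde u$ to solve the linearized problem $\D\tilde u=0$ in $B_{1/2}^+$, $\p_n\tilde u=0$ on $B_{1/2}'$.

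The contradiction then closes quickly. By even reflection across $\{x_n=0\}$, $\tilde u$ extends to a harmonic function on $B_{1/2}$, hence is smooth, and $\p_n\tilde u(0)=0$ forces its first order Taylor polynomial at $0$ to be $\tilde u(0)+\xi\cdot x'$ for some $\xi\in\R^{n-1}$, with $|\tilde u(x)-\tilde u(0)-\xi\cdot x'|\le C_*|x|^2$ near $0$. Undoing the rescaling, for $k$ large $u_k(x)=x_n+\e_k\tilde u_k(x)\approx x\cdot\nu_k$ with $\nu_k:=(e_n+\e_k\xi)/|e_n+\e_k\xi|$ and error at most $C_*\e_k\rho^2$ on $B_\rho$; choosing $\rho$ with $C_*\rho^2\le\rho^{1+\b}$ — possible precisely because $\b<1$ — contradicts the assumed failure of the improvement for large $k$. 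This choice fixes $\rho=\rho(\b)$, and tracking the smallness conditions through the iteration (the coefficient oscillation at scale $\rho^k$ is $O(\rho^{k\a})+O(\rho^k)$, which must remain below the square of the flatness $\rho^{k\b}\e$ there) fixes $\bar\e=\e_0=\e_0(\b)$.

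The step I expect to be hardest is the Harnack inequality for flat solutions and the compactness it produces: one must show that, despite the variable coefficients and variable $Q$, the free boundaries $F_k$ are controlled well enough for the $\tilde u_k$ to converge \emph{up to} the free boundary to a genuine solution of the Neumann problem above, with no defect in the limiting boundary condition, and one must verify that the coefficient oscillation at each dyadic scale of the iteration stays under the smallness the improvement lemma requires, so that the iteration does not stall. The remaining ingredients — the geometric convergence of the normals and the passage from Hölder normals to a $C^{1,\b}$ graph with a universal bound — are by now routine, and I would import them from \cite{MR2813524, MR3485136} (and \cite{MR618549} for the one-phase setting).
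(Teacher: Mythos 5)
The paper does not prove Theorem \ref{thm:classic}; it is stated as a recalled result, with the label ``[DFS]'' pointing to De Silva--Ferrari--Salsa \cite{MR3485136} (the divergence-form, variable-coefficient version of De Silva's compactness scheme \cite{MR2813524}, going back to Alt--Caffarelli \cite{MR618549}). Your sketch is precisely the argument from those references --- improvement of flatness by compactness and contradiction, Harnack inequality for flat solutions, linearization of the rescaled functions $(u_k-x_n)/\e_k$ to the Neumann problem $\D\tilde u=0$ in $B_{1/2}^+$, $\p_n\tilde u=0$ on $B_{1/2}'$, even reflection and interior smoothness, iteration, geometric convergence of the normals, and the passage to a $C^{1,\b}$ graph --- so there is no paper-internal proof to compare against; you have reproduced the cited route correctly.

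One accounting detail is worth correcting, since as written it would make your iteration appear to stall. You say the coefficient oscillation at scale $\rho^k$ ``must remain below the square of the flatness $\rho^{k\b}\e$''; unwinding this forces $\rho^{k\a}\le\rho^{2k\b}$, i.e. $\a\ge 2\b$, which fails for $\b$ close to $\a$. What the compactness argument actually needs is that the \emph{normalized} coefficient error $\|a-\d\|/\e$ be small (so that the error terms in the PDE for $(u-x_n)/\e$ vanish in the limit), i.e. coefficient oscillation $\le\d_0\cdot(\text{flatness})$ for a small fixed $\d_0$. After one rescaling this ratio becomes $\d_0\rho^{\a-\b}$, so the iteration closes precisely when $\a\ge\b$, not $\a\ge 2\b$. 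The ``$\le\e^2$'' hypothesis in the theorem statement is just a clean way to guarantee the initial ratio is $\le\e$; it is not literally preserved by the rescaling, but the weaker ratio condition is. This is also exactly the bookkeeping in Section~\ref{sec:alm_opt} of the paper, where the hypothesis \eqref{eq:hyp_calpha} is of the form $\|a^{ij}-\d^{ij}\|+\|Q-1\|\le\d\e$ and the text notes that the improvement-of-flatness lemma can be iterated ``provided that $\b\le\a$''. (Incidentally, this is why the ``for any $\b\in(0,1)$'' in the statement should really be read as $\b$ limited by the Hölder exponent of the coefficients; in this paper's application one only needs $\b<1/2<\a$, so the point is harmless.)
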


\section{Almost optimal regularity}\label{sec:alm_opt}

In this section we show that $\bar F$ has almost optimal regularity in a neighborhood of $Z$. Precisely, we will show that if $x_0\in \L$, then $F$ is a $C^{1,\b}$ regular surface in a neighborhood of $x_0$ for any $\b\in (0,\min(1/2,\a))$, see Proposition \ref{thm:almost_opt}.

After a domain deformation and a dilation we assume as before that
$u$ is a viscosity solution of \eqref{eq:def} with $$\W=B^+_1,  \quad Z =B'_1, \quad 0 \in \L,$$ and that for $\a\in(0,1)$ the following smallness hypothesis for the coefficients hold
\begin{align}
\label{eq:hyp_calpha}
\tag{$C_{\e,\d}$} 
a^{ij}(0) = \d^{ij}, \qquad Q(0)=1, \qquad \|a^{ij}-\d^{ij}\|_{C^{\a}(B_1)}+\|Q-1\|_{C^{\a}(B_1)}\leq \delta \e. 
\end{align}
for some $\e\in(0,\e_0)$ and with $\d$ and $\e_0$ small, universal, to be made precise later.

The next Lemma will be used to show that $\{|Du|>Q\}\cap \L$ is open relative to $\{x_n=0\}$.

\begin{lemma}\label{lem:sigma_in_gamma}
Given $\eta>0$ there exists $\e_0>0$ such that if $\e\in(0,\e_0)$ then
\[
 u > (Q(0)+\eta)(x_n-\e)_+ \text{ in } B_1^+ \qquad \Rightarrow \qquad |D_au(0)|>Q(0).
\]
\end{lemma}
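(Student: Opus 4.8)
The plan is to argue by contradiction, after noticing what the hypothesis really says. The inequality $u>(Q(0)+\eta)(x_n-\e)_+$ in $B_1^+$ forces $u>0$ throughout $B_1^+$, since the right-hand side vanishes on $\{x_n\le\e\}$. Hence $\W^+=B_1^+$: there is no free boundary inside $B_1$, and near the origin $u$ is merely a nonnegative $L$-harmonic function on the half-ball that vanishes on $B_1'$. Because $u$ vanishes on $B_1'$ its tangential derivatives vanish at $0$, and \eqref{eq:hyp_calpha} gives $a(0)=I$, so $|D_au(0)|=\p_n u(0)$ and the statement reduces to the strict Hopf-type bound $\p_n u(0)>Q(0)=1$.

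Assume instead that $\p_n u(0)\le 1$. The heart of the matter is to promote this pointwise bound into a \emph{uniform} estimate near the origin. First, by the boundary Harnack inequality for divergence-form operators on the Lipschitz domain $B_{1/2}^+$, comparing $u$ with the $L$-harmonic replacement $v$ of $x_n$ (which by \eqref{eq:hyp_calpha} satisfies $v\approx x_n$ with constants close to $1$), one controls $\sup(u/v)$ on a smaller half-ball by $\inf(u/v)$, and the infimum is in turn bounded by the limit of $u/v$ along $te_n$, namely $\p_n u(0)/\p_n v(0)$, which is at most a universal constant. Thus $u\le Cx_n$ on a fixed half-ball, so $\|u\|_{L^\infty}$ there is universally bounded, and boundary Schauder estimates for $C^\a$ coefficients and zero Dirichlet data give a universal bound for $u$ in $C^{1,\a}$ up to $B'_c$. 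Writing $\theta:=u-\p_n u(0)\,x_n$, we have $\theta=0$ on $B'_c$ and $\p_n\theta(0)=0$, whence $|\theta(x)|\le Cx_n|x|^\a$; in particular
\[
u(te_n)\;=\;\p_n u(0)\,t+\theta(te_n)\;\le\;t+Ct^{1+\a}\qquad\text{for }0<t<c,
\]
with $C$ universal.

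Comparing this with the hypothesis along the axis, $u(te_n)>(1+\eta)(t-\e)$ for $\e<t<c$, gives $(1+\eta)\e>\eta t-Ct^{1+\a}$; optimizing over $t$ (taking $t\sim(\eta/C)^{1/\a}$, or $t=c$ if that is smaller) yields $(1+\eta)\e>c_0>0$ for a constant $c_0$ depending only on $\eta$, $\a$ and the structural constants, which is absurd as soon as $\e<\e_0:=c_0/(1+\eta)$; this proves the lemma with $\e_0$ universal. The same reasoning can be packaged as a compactness argument: were the statement false, there would be $\e_k\to0$ and solutions $u_k>(1+\eta)(x_n-\e_k)_+$ with $\p_n u_k(0)\le1$ and coefficients converging to the identity, and the uniform $C^{1,\a}$ bound above would let $u_k$ subconverge in $C^1$ to a function $u_\infty$ harmonic on a half-ball, vanishing on the flat part, satisfying $u_\infty\ge(1+\eta)(x_n)_+$ — forcing $\p_n u_\infty(0)\ge1+\eta$ — while $\p_n u_\infty(0)=\lim\p_n u_k(0)\le1$, a contradiction. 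I expect the genuine obstacle to be precisely the uniform $L^\infty$/$C^{1,\a}$ control near the origin: the hypothesis only bounds $u$ from below, so the matching upper bound has to be extracted from positivity of $u$ on the half-ball together with the assumed slope bound $\p_n u(0)\le1$, via the boundary Harnack inequality; the subsequent clash-and-optimize is elementary.
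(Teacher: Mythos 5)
Your proof is correct, and it takes a genuinely different route from the paper's.

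The paper argues by direct barrier comparison: it constructs a paraboloid-shaped domain $\Omega_0 = \{1/2>x_n>8\e|x'|^2\}\cap\{|x'|<1/2\}$, lets $\varphi_0$ be the $L$-harmonic lift of the linear boundary data there, verifies via Schauder estimates that $|D_a\varphi_0|$ is within $C\e$ of $1$ on the parabola, and then slides the family $\psi_t=(1+\eta/2)\varphi_0(\cdot-te_n)$ from $t=\e$ down to $t=0$ under the graph of $u$. Sliding is justified by the viscosity comparison principle: $\psi_t$ is a strict comparison subsolution (slope exceeding $Q$ on its zero set) and so cannot touch the viscosity supersolution $u$ from below. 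The conclusion $u\ge(1+\eta/2)\varphi_0$ gives the slope bound at $0$ directly. This argument never needs to observe that the positivity set is the whole half-ball; it works entirely through the free boundary comparison machinery set up in Section~2.

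Your argument starts from a structural observation the paper does not make explicit: since $(Q(0)+\eta)(x_n-\e)_+$ vanishes on $\{x_n\le\e\}$, the hypothesis forces $u>0$ on all of $B_1^+$, so $\Omega^+=B_1^+$ and there is no interior free boundary — the statement degenerates to a quantitative Hopf lemma for a positive $L$-harmonic function on a half-ball vanishing on the flat part. You then derive that from the boundary Harnack principle (to upgrade the assumed slope bound $\p_n u(0)\le 1$ into a uniform $L^\infty$ bound $u\le Cx_n$), boundary Schauder estimates (to get $u\in C^{1,\a}$ up to $B'$ and hence $u(te_n)\le t+Ct^{1+\a}$), and a clash with the lower bound $u(te_n)>(1+\eta)(t-\e)$, optimized over $t$. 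The step that deserves the care you gave it is precisely the one you flagged: $u$ is only bounded from below a priori, and the $L^\infty$ control must be extracted from boundary Harnack together with the contradiction hypothesis $\p_n u(0)\le 1$ (the infimum of $u/v$ is dominated by the boundary value $\p_nu(0)/\p_nv(0)\le C$, hence so is the supremum). What your route buys is a reduction to classical elliptic boundary estimates, completely bypassing the viscosity comparison and the ad hoc paraboloid barrier; the cost is invoking the boundary Harnack principle for divergence operators, a heavier off-the-shelf tool than anything the paper uses at this point.
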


\begin{proof}
Let $\Omega_0$ be the domain above the parabola $P:=\{x_n = 8 \e |x'|^2\}$ that lies inside the cylinder $B'_{1/2} \times [0, 1/2]$, i.e.
$$\Omega_0:=\{1/2>x_n> 8 \e |x'|^2 \} \cap \{|x'| < 1/2 \}.$$

\begin{figure}[t!]
\begin{center}
\includegraphics[width=10cm]{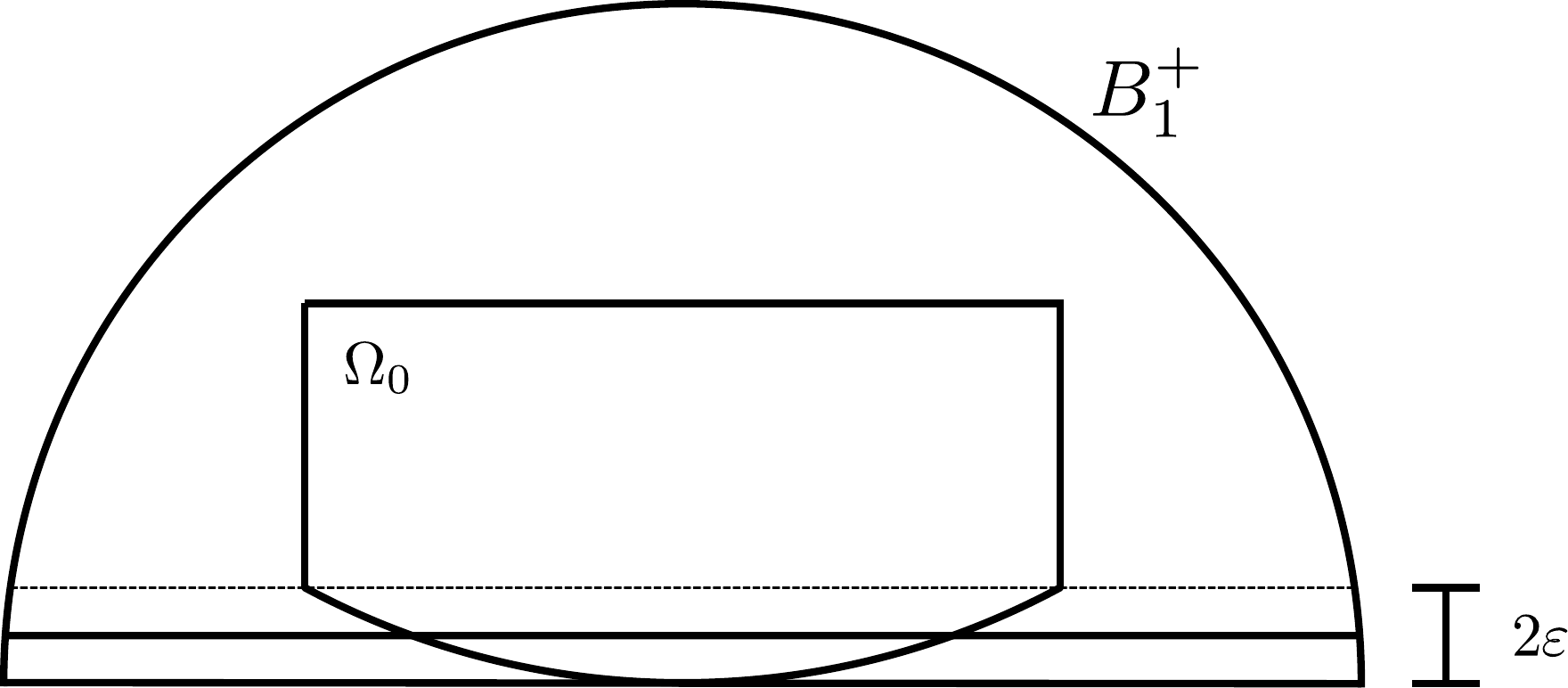}
\end{center}
\caption{Configuration for the proof of Lemma \ref{lem:sigma_in_gamma}}
\label{fig:sigma_in_gamma}
\end{figure}

 Define $\varphi_0$ in $\Omega_0$ as the solution to
\[
\begin{cases}
 L\varphi_0 = 0 \text{ in } \Omega_0,\\
 \varphi_0 = x_n-8 \e |x'|^2 \text{ on } \p \Omega _0.
\end{cases}
\]
 By \eqref{eq:hyp_calpha} we easily get that $\varphi_0$ is an $\e$-perturbation of $x_n$ and by the Schauder estimates up to the boundary we obtain that
\[
 ||D_a\varphi_0| - 1| \leq C\e  \text{ over } P \cap \{x_n \le \e\}.
\]

The hypothesis implies that $\psi_0:=(1+\eta/2) \varphi_0$ is below $u$ on $\p \Omega_0 \cap \{x_n> \e\}$, and the inequality above says that $|D_a \psi_0| > 1+\e > Q$ on the remaining part of the boundary, provided that $\e < c \eta$.

Let $\psi_t(x):=\psi_0(x-te_n)$ be the translation of $\psi_0$ by $t e_n$. Notice that the graph of $\psi_\e$ is below the graph of $u$. Then we slide the graph of $\psi_\e$ in the $-e_n$ direction till it coincides with $\psi_0$ (i.e. decrease $t$ from $\e$ to $0$). The graph of $\psi_t$ cannot touch the graph of $u$ neither on the free boundary, nor on the remaining part of the boundary of $\p \Omega_t \cap \{ x_n > \e\}$. In conclusion $u \ge \psi_0$ which gives the desired claim. 
\end{proof}

Thanks to Lemma \ref{lem:lin_behavior} any point in $\{|Du|>Q\} \cap \L$ satisfies the hypothesis of the previous lemma after a sufficiently large dilation (with $\eta$ depending on $|Du|-Q$). By applying the previous result centered at points in a sufficiently small neighborhood we conclude that $\{|Du|>Q\}\cap \L$ is open relative to $\{x_n=0\}$. To prove Theorem \ref{thm:main1} we can now focus on the case where $0 \in \L$ with $Du(0)=e_n$, i.e. when $0$ belongs to the thin boundary $\partial ' \L$. By invoking once again Lemma \ref{lem:lin_behavior}, we get that after a sufficiently large dilation we can start with a flatness hypothesis of the form
\begin{align}\label{eq:flat_hyp}
\tag{$F_\e$} x_n + \e  \geq u \geq (x_n -\e)_+ \text{ in $B_1^+$},
\end{align}
for some small $\e$.
Let us recall the Harnack inequality from \cite[Theorem 4.1]{MR3485136}.

\begin{lemma}\label{lem:dim_osc0}
Let $v$ be a viscosity \textbf{solution} of \eqref{eq:vsol} in $B_1$. There exist $\e_0,\theta\in(0,1)$ such that if for $a,b\in(0,\e_0)$,
\[
(x_n + a)_+ \geq v \geq (x_n - b)_+ \text{ in $B_1$}
\]
then in $B_{1/2}$ either
\[
(x_n + a-\theta c)_+ \geq v \qquad \text{ or } \qquad v \geq (x_n - b +\theta c)_+ \qquad (c=(a+b)/2)
\]
\end{lemma}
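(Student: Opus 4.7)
The plan is to follow the classical De Silva strategy from \cite{MR2813524}, whose two ingredients are: (i) a quantitative interior separation of $v$ from one of the two linear barriers, and (ii) propagation of this separation down to the free boundary through a sliding barrier argument.

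First I would fix the interior point $p := \tfrac12 e_n$. Since $a,b < \e_0$ are small, $v(p)$ lies in $[1/2-b, 1/2+a]$ and $v$ is strictly positive and $L$-harmonic in a fixed ball $B_r(p)$ disjoint from $\{v=0\}$. Up to interchanging the roles of the upper and lower barriers, assume $v(p) \geq x_n(p) - b + c/2$. Then $w := v - (x_n-b)$ is nonnegative in $B_r(p)$, and $Lw$ is controlled by the oscillation of $a^{ij}$, hence by $\e$; applying the interior weak Harnack inequality for divergence-form operators yields $w \geq \kappa c$ on $B_{r/2}(p)$ for a universal $\kappa > 0$, provided $\e_0$ is small enough that the lower-order error is absorbed.

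Next I would construct a one-parameter family of strict comparison subsolutions
\[
\psi_t(x) := \gamma\bigl(x_n - b + t + \sigma c \,\varphi(x)\bigr)_+,
\]
where $\varphi$ is a fixed radial cutoff that is $L$-subharmonic in $B_{3/4} \sm \overline{B_{r/2}(p)}$, vanishes on $\p B_{3/4}$, and is bounded below by $1$ on $B_{r/2}(p)$. Choosing $\gamma$ slightly less than $1$ ensures $|D_a \psi_t| < Q(x)$ on the free boundary of $\psi_t$, using the flatness and the $C^\a$-closeness of $a^{ij}, Q$ to $\d^{ij}, 1$. I then slide $t$ upward from a very negative value. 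Contact of $\psi_t$ with $v$ cannot occur on $\p B_{3/4} \cap \{x_n > b\}$ (where $v \geq (x_n - b)_+$ dominates), nor on $B_{r/2}(p)$ (by the lift from the first step), nor on the free boundary $\{\psi_t = 0\}$ (by the strict subsolution property), nor in the interior of $\{\psi_t > 0\} \cap \{v > 0\}$ (by the strong maximum principle for $L$). Hence $\psi_t \leq v$ persists up to $t = \theta c$ for a suitably small universal $\theta > 0$, delivering the improved bound $v \geq (x_n - b + \theta c)_+$ in $B_{1/2}$.

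The main obstacle is balancing the size of the lift $\theta c$ against the strict subsolution condition on the free boundary of $\psi_t$. The perturbation $\sigma c\,\varphi$ tilts the normal derivative of $\psi_t$ on its free boundary by $O(\sigma c)$, and this has to be absorbed by the gradient slack $(1 - \gamma)$ produced by the multiplier $\gamma$, while $\gamma$ must remain close enough to $1$ that the lift on $B_{r/2}(p)$ survives. This forces the universal constants $\kappa, \sigma, \gamma, \theta$ to be chosen in a specific order with $\e_0$ selected last, absorbing the $C^\a$-deviations of $a^{ij}, Q$; it is precisely here that the smallness of $\e_0$ becomes essential.
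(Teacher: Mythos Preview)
Your overall strategy matches the paper's own treatment: the paper does not give a self-contained proof but cites \cite[Theorem 4.1]{MR3485136} and sketches exactly the two steps you describe --- interior Harnack applied to $v-P_-$ near $e_n/2$, followed by a barrier that pushes the improvement past the free boundary.

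There is, however, a genuine error in your barrier construction. To bound $v$ from below you need $\psi_t$ to be a \emph{strict comparison subsolution}, which by the paper's definitions means $|D_a\psi_t|>Q$ on $\partial\{\psi_t>0\}$, not $|D_a\psi_t|<Q$. Taking $\gamma<1$ pushes the gradient the wrong way. More importantly, the mechanism you describe in the last paragraph is backwards: in De~Silva's construction the bump $\varphi$ is chosen so that $\partial_n\varphi\geq c_1>0$ on the strip where the free boundary of $\psi_t$ lives (because $D\varphi$ points toward the center $p=e_n/2$, which lies above that strip). Hence with $\gamma=1$ one has $|D\psi_t|^2=1+2\sigma c\,\partial_n\varphi+O((\sigma c)^2)>1$, and after absorbing the $C^\alpha$ deviation of $a^{ij},Q$ this already gives $|D_a\psi_t|>Q$. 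The perturbation $\sigma c\,\varphi$ is the \emph{source} of the subsolution property on the free boundary, not an error term to be absorbed by a multiplier. A secondary issue is that with a genuine sliding parameter $t>0$ you have $\psi_t=\gamma(x_n-b+t)_+$ on $\partial B_{3/4}$, and the inequality $v\geq(x_n-b)_+$ does not control this when $x_n$ is close to $b$; the standard argument avoids this by taking $t=0$ and comparing directly (or by sliding the graph vertically rather than translating the argument).

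Once you drop the multiplier $\gamma$, use the sign of $\partial_n\varphi$ on the free boundary strip, and handle the outer boundary $\partial B_{3/4}$ without the extra $t$, the rest of your outline goes through and coincides with the argument recalled in the paper.
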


Let us briefly recall the ideas from \cite{MR3485136} to prove Lemma \ref{lem:dim_osc0}. Let $P_+(x)=(x_n + a)_+$, $P_-(x)=(x_n - b)_+$, and $P=(x_n + d)_+$ where $d=(a-b)/2$. One has two consider two possible cases, either $u(e_n/2)\geq P(e_n/2)$ or the opposite inequality holds. In the former case one gets to improve the lower bound, i.e. $(x_n -b+\theta c)_+ \geq v$, and in the latter one gets to improve the upper bound by a similar argument. Assuming that $u(e_n/2)\geq P(e_n/2)$, the idea is to apply the classical Harnack inequality to $v-P_-$ around $e_n/2$ and construct a barrier that propagates the improvement beyond $\{x_n=b\}$ thanks to the comparison principle.

In the case that $u$ is only a subsolution of \eqref{eq:vsol} in $B_1$, the barrier argument to improve the upper bound of $u$ still applies. If $u$ is a supersolution of \eqref{eq:vsol} restricted to $B_1^+$, the barrier argument to improve the lower bound can be performed if we assume that the free boundary of the barrier does not reach $\{x_n=0\}$, where $u$ is no longer a supersolution. In this case we can get an improvement proportional to $c=(a+b)/2$ if we assume $b\geq a$.

\begin{corollary}\label{lem:dim_osc1}
There exist $\e_0,\theta\in(0,1)$ such that if for $0<a\leq b<\e_0$,
\[
(x_n + a)_+ \geq u \geq (x_n - b)_+ \text{ in $B_1^+$}
\]
then in $B_{1/2}^+$ either
\[
(x_n + a-\theta c)_+ \geq u \qquad \text{ or } \qquad u \geq (x_n - b +\theta c)_+ \qquad (c=(a+b)/2)
\]
\end{corollary}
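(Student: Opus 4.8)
The plan is to deduce Corollary \ref{lem:dim_osc1} from Lemma \ref{lem:dim_osc0} and from the dichotomy already described in the paragraph following that lemma. Recall that a solution $u$ of \eqref{eq:def} in $B_1^+$ is in particular a subsolution of \eqref{eq:vsol} in $B_1$ (by Remark \ref{rmk:vis}, extending $u$ by zero below $\{x_n=0\}$), and it is a supersolution of \eqref{eq:vsol} restricted to $B_1^+$. So the idea is to run the proof of Lemma \ref{lem:dim_osc0} verbatim, checking that each of the two barrier arguments still goes through with these one-sided properties, under the extra hypothesis $a\le b$.

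First I would set up the same competitors as in \cite{MR3485136}: $P_+=(x_n+a)_+$, $P_-=(x_n-b)_+$, $P=(x_n+d)_+$ with $d=(a-b)/2\le 0$, and split into the two cases $u(e_n/2)\ge P(e_n/2)$ or $u(e_n/2)<P(e_n/2)$. \textbf{Case 1 (improve the lower bound).} Here one applies the interior Harnack inequality to the nonnegative $L$-harmonic function $u-P_-$ in a ball around $e_n/2$, obtaining $u-P_-\ge c_0 c$ there, and then propagates this with a barrier of the form $P_-+c(\text{something})$ whose own free boundary stays strictly inside $\{x_n>0\}$. This is where $a\le b$ enters: since $c=(a+b)/2\le b$ and the improved lower barrier is a perturbation of $(x_n-b+\theta c)_+$, its free boundary sits near $\{x_n=b-\theta c\}$, which is bounded away from $\{x_n=0\}$ provided $\e_0$ is small and $\theta<1$; hence $u$ is a genuine supersolution of \eqref{eq:vsol} in the region where the comparison takes place, and the barrier argument applies unchanged, yielding $u\ge(x_n-b+\theta c)_+$ in $B_{1/2}^+$. \textbf{Case 2 (improve the upper bound).} Here one wants $u\le(x_n+a-\theta c)_+$ in $B_{1/2}^+$; the relevant barrier argument in \cite{MR3485136} only uses that $u$ is a subsolution of \eqref{eq:vsol}, which holds on all of $B_1$ for our extended $u$, so it carries over directly with no sign restriction on $b-a$ needed.

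The main obstacle I expect is the bookkeeping in Case 1: one must verify quantitatively that the free boundary of the lower barrier does not touch $\{x_n=0\}$ throughout the sliding/comparison, so that the supersolution property of $u$ — which is only valid in $B_1^+$, i.e. away from the fixed boundary — is actually used only where it holds. This is a matter of choosing $\theta$ small relative to universal constants and $\e_0$ small relative to $\theta$, exactly as in the half-space case, but it is the one place where the argument genuinely differs from the two-sided statement. Once both cases are handled, combining them gives the stated dichotomy, and the constants $\e_0,\theta$ are universal since all the ingredients (Harnack, Schauder for $L$, the explicit barriers) are.
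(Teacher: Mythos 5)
Your proposal is correct and follows essentially the same route the paper sketches in the paragraph after Lemma \ref{lem:dim_osc0}. The key observation — that $a\le b$ gives $c\le b$ and hence keeps the lower barrier's free boundary in $\{x_n>b-\theta c\}\subset\{x_n>0\}$, where the supersolution property of $u$ is actually available, while the upper-bound step only uses the subsolution property of the extension of $u$ to all of $B_1$ — matches the paper's reasoning exactly.
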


By iterating the previous corollary we get the following diminish of oscillation.

\begin{lemma}\label{lem:dim_osc}
There exist $\e_0,\m,\theta\in(0,1)$ such that if for $\e\in(0,\e_0)$ and \eqref{eq:flat_hyp} holds, then in $B_\m^+$ either
\[
x_n \geq u \qquad \text{ or } \qquad u \geq (x_n -(1-\theta)\e)_+.
\]
\end{lemma}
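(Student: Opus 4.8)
The plan is to iterate Corollary \ref{lem:dim_osc1} at dyadic scales, keeping track of the upper and lower linear bounds, until the first time the upper bound improves. Start with $a_0 = b_0 = \e$, so that \eqref{eq:flat_hyp} reads $(x_n+a_0)_+ \ge u \ge (x_n-b_0)_+$ in $B_1^+$. Suppose that after $k$ steps we have, in $B_{2^{-k}}^+$, the bound $(x_n + a_k)_+ \ge u \ge (x_n - b_k)_+$ with $a_k \le b_k$ and both still below $\e_0$. Apply Corollary \ref{lem:dim_osc1} after rescaling $B_{2^{-k}}^+$ to $B_1^+$ (the rescaling preserves the class of subsolutions/supersolutions and only improves the smallness of the coefficients, since $a^{ij},Q$ are $C^\a$ and the rescaled oscillation is multiplied by $2^{-k\a}$; this is why hypothesis \eqref{eq:hyp_calpha} is stated with a $\d\e$ on the right-hand side). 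We land in one of two alternatives in $B_{2^{-k-1}}^+$: either the upper bound improves to $(x_n + a_k - \theta c_k)_+ \ge u$, or the lower bound improves to $u \ge (x_n - b_k + \theta c_k)_+$, where $c_k = (a_k+b_k)/2$.

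If the first alternative occurs at some step $k$, I would like to conclude immediately. Since $a_k \le a_0 = \e$ and $c_k = (a_k+b_k)/2 \le \e$ (note $b_k \le b_0 = \e$ always, because the lower bound only moves upward when it improves, never downward), the improved upper bound gives $u \le (x_n + a_k - \theta c_k)_+ \le (x_n + \e)_+$ — which is not yet what we want. The cleaner route: observe that in the first alternative we in fact have $u \le x_n + \e - \theta c_k$ pointwise, and more importantly $u$ is bounded above by a half-plane whose positivity set is contained in $\{x_n \ge -\e + \theta c_k\}$ after translating; but the statement we are after, "$x_n \ge u$ in $B_\mu^+$", requires pushing the upper half-plane all the way down to $\{x_n\}$. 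This suggests instead that we should track the quantity $a_k$ and stop as soon as $a_k$ would go negative, i.e. as soon as the accumulated upper improvement $\theta(c_0 + \cdots)$ exceeds $a_0 = \e$; at that point the natural reading of Corollary \ref{lem:dim_osc1} is that $x_n \ge u$ (the degenerate case $a_k \le 0$ of the upper bound). So: iterate, and let $m$ be the first index where the upper alternative is chosen; by choosing $\mu = 2^{-m}$ we then get $x_n \ge u$ in $B_\mu^+$.

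It remains to handle the case where the \emph{lower} alternative is chosen at every step $k = 0, 1, \dots, N-1$. Then $b_{k+1} = b_k - \theta c_k$ and $a_{k+1} = a_k \le b_k$ still hold; since $c_k \ge a_k/2 = \e/2$ at every such step, $b_k$ decreases by at least $\theta\e/2$ each time, so after at most $N \le 2/\theta$ steps we reach $b_N \le b_0 - N\theta\e/2$. Choosing $N = \lceil 2/\theta \rceil$ forces $b_N \le \e - \theta\e = (1-\theta)\e$ (adjusting constants so the bookkeeping closes: one tracks that $\sum_{k<N} c_k \ge \e$ since $c_k \ge \e/2$ and $N \ge 2/\theta$ gives $\theta\sum c_k \ge \theta \cdot N \cdot \e/2 \ge \e$ — wait, this overshoots, so one stops at the first $N$ with $\theta\sum_{k<N} c_k \ge \theta\e$, equivalently $\sum_{k<N}c_k \ge \e$, which happens with $N \le 2$ if no degeneration occurred earlier). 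In any case, after a bounded number of steps we obtain in $B_{2^{-N}}^+$ the bound $u \ge (x_n - (1-\theta)\e)_+$, giving the second alternative with $\mu = 2^{-N}$. Taking $\mu$ to be the minimum of the finitely many dyadic scales arising, and noting that the number of iterations is bounded by a universal constant depending only on $\theta$, we obtain universal $\e_0, \mu, \theta$ as claimed.

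The main obstacle I anticipate is the rescaling step: after zooming into $B_{2^{-k}}^+$, one must verify that the rescaled function is still a viscosity solution of \eqref{eq:def} with coefficients satisfying the smallness hypothesis required by Corollary \ref{lem:dim_osc1}, and that the flatness width measured in the rescaled ball is still $\le \e_0$. The coefficient smallness is fine because $\|a^{ij}(2^{-k}\cdot) - \d^{ij}\|_{C^\a} = 2^{-k\a}\|a^{ij}-\d^{ij}\|_{C^\a} \le \d\e$ only improves; the subtlety is purely the bookkeeping of the two slopes $a_k, b_k$ and ensuring the hypothesis $a_k \le b_k$ of the corollary is preserved under the iteration — which it is, since the upper slope never increases and we stop the moment it would, while the lower slope, once raised, stays raised.
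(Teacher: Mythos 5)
Your iteration scheme starting from $a_0=b_0=\e$ runs into two genuine difficulties that the paper avoids by a different choice of initial lower slope.

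First, Corollary \ref{lem:dim_osc1} is stated for $0<a\le b$, and this ordering is not preserved by your iteration. If the lower alternative is chosen at the very first step you get $b_1 = \e - \theta c_0 = (1-\theta)\e$ while $a_1=\e$, so $a_1>b_1$ and the corollary can no longer be invoked. (Recall the reason for the hypothesis $a\le b$: the lower barrier must keep its free boundary away from $\{x_n=0\}$, where $u$ is only a subsolution, and this is ensured precisely when $b\ge a$.)

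Second, and more fundamentally, when the upper alternative is chosen for the first time at step $m$ you cannot conclude $x_n\ge u$. The improved upper slope is $a_m-\theta c_m = \e - \theta(\e+b_m)/2 \ge \e-\theta\e=(1-\theta)\e>0$, since $b_m\le b_0=\e$; so the upper plane never drops down to $x_n$. You noticed the tension (``which is not yet what we want''), but your proposed resolution (``stop as soon as $a_k$ would go negative'') never triggers with your bookkeeping: after a single upper improvement $a_{m+1}\ge(1-\theta)\e$ remains strictly positive, and continuing would again run into the ordering obstruction above.

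The paper resolves both issues with one device: it starts the lower slope much farther away, at $b_0 = 2\e/\theta$, which is still a valid lower slope since $u\ge(x_n-\e)_+\ge(x_n-2\e/\theta)_+$, while the upper slope stays fixed at $a_i=\e$ for every $i$ thanks to the scale-invariant bound $u\le(1+\e)x_n\le(x_n+\e)_+$ on $B_1^+$. With $b_i\ge 2\e/\theta$ always (here $b_i$ grows in the rescaled frame), the ordering $\e=a_i\le b_i$ holds trivially, and the upper improvement satisfies $a_i-\theta(\e+b_i)/2\le \e-\theta\e/2-\e<0$, so a single occurrence of the first alternative already gives $(x_n)_+\ge u_i$ in $B_{1/2}^+$, i.e.\ $x_n\ge u$ in $B_\mu^+$. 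In the remaining branch $b_i=(2\e/\theta)C_0^i$ with $C_0=(1-\theta/2)/\bar\mu>1$, but translating back to the original scale gives $\bar\mu^i b_i=(2\e/\theta)(1-\theta/2)^i$, which decays geometrically; choosing $k$ so that $(2/\theta)(1-\theta/2)^k\le 1-\theta$ and taking $\mu=\bar\mu^k$ yields $u\ge(x_n-(1-\theta)\e)_+$ in $B_\mu^+$. Your proof could be repaired by adopting $b_0=2\e/\theta$ and the scale-invariant upper bound, but as written the argument does not close.
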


We remark that in the Corollary \ref{lem:dim_osc1} and Lemma \ref{lem:dim_osc} above we do not assume that $0 \in \p' \L$ but only that \eqref{eq:flat_hyp} holds.  If the first alternative of Lemma \ref{lem:dim_osc} holds then $\bar F$ is unconstrained in $B^+_\mu$ and we fall in situation of the interior case as in \cite{MR3485136}. If the second alternative holds then $u$ satisfies a version of \eqref{eq:flat_hyp} in which we replace $B_1^+$ by $B^+_\mu$ and $\e$ by $(1-\theta) \e$.

\begin{proof}
Let $\bar\e_0,\bar\m=1/2,\theta\in(0,1)$ the constants corresponding to Corollary \ref{lem:dim_osc1} and let $\e_0,\m\in(0,1)$ to be fixed later in the proof. As we will be iterating Corollary \ref{lem:dim_osc1} a finite number of times let us actually say that for some $k\in\N$ to be determined $\m=\bar \m^k$.

Let $u_i(x) = \bar\m^{-i}u(\bar\m^i x)$. We have that for any $i=0,1,2,\ldots$
\[
(x_n + \e)_+ \geq (1+\e)x_n \geq u_i \text{ in $B_1^+$}
\]

Let $C_0 = (1-\theta/2)/\bar\m>1$ and $b_i=(2\e/\theta)C_0^i$ for $i\in\{0,1,2,\ldots,(k-1)\}$. Assume by induction that
\[
u_i \geq (x_n-b_i)_+ \text{ in $B_1^+$}
\]
By Corollary \ref{lem:dim_osc1}, we get that in $B_{\bar\m}^+$ either
\[
(x_n+\e-\theta(\e+b_i)/2)_+ \geq u_i \qquad \text{ or } \qquad u_i \geq (x_n-b_i+\theta(\e+b_i)/2)
\]
The first alternative implies $(x_n)_+\geq u$ in $B_\m^+$ and would settle the proof. On the other hand, the second option implies the subsequent step in the induction, $u_{i+1} \geq (x_n-b_{i+1})_+$ in $B_1^+$.

In order to iterate Corollary \ref{lem:dim_osc1} up to $i=k$ we need $b_{k-1}\leq \bar\e_0$ which follows by taking $\e_0 = \theta\bar\e_0/(2C_0^{k-1})$.

We finally fix $k$ sufficiently large such that $1-\theta \geq (2/\theta)(1-\theta/2)^k$. Hence $u_k \geq (x_n -b_k)_+$ in $B_1^+$ implies $u \geq (x_n-(1-\theta)\e)_+$ in $B_\m^+$.
\end{proof}

Next we define the function $w$ in $\overline {\Omega^+}$ as
$$w:= \frac{x_n-u}{\e},$$
and clearly $w \ge 0$ on $\bar F$ and $\|w\|_{L^\infty} \le 1$ if  \eqref{eq:flat_hyp} holds. Lemmas \ref{lem:dim_osc0} and \ref{lem:dim_osc1} provide a diminish of oscillation for $w$ as we restrict to a smaller ball.
By iterating these lemmas (and the standard Harnack inequality at points away from $\{x_n=0\}$) several times we obtain an almost uniform Holder modulus of continuity for $w$ (except for points at smaller and smaller scales). A version of Arzela-Ascoli theorem gives the compactness of a family of $w$'s as $\e ,\delta \to 0$.

Precisely, let us consider a sequence a sequence of solutions $\{u_k\}$ satisfying {\normalfont(\hyperref[eq:flat_hyp]{$F_{\e_k}$})} and {\normalfont(\hyperref[eq:hyp_calpha]{$C_{\e_k, \d_k}$})} with $\e_k,\delta_k \to 0$, and the graphs of the corresponding $w_k$ restricted to the cylinder $\bar B_{1/2} \times \R$,
$$G_k:=\{(x,w_k(x))|\,  x \in \overline {\Omega_k^+} \cap \bar B_{1/2}\}.$$ 

\begin{corollary}\label{cor:cpt}
There exists a subsequence of $G_k$'s which converges (in the Hausdorff distance) to the graph of a Holder continuous function $\bar w \in C(\bar B^+_{1/2})$.
\end{corollary}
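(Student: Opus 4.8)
The plan is to establish the convergence in two stages: first a uniform equicontinuity estimate for the functions $w_k$ away from the thin set $\{x_n=0\}$ at fixed scales, then a diagonal-type argument to handle the degeneration near $\{x_n=0\}$ as $\e_k,\d_k \to 0$. The key quantitative input is the diminish-of-oscillation statement contained in Lemma \ref{lem:dim_osc0} and Corollary \ref{lem:dim_osc1}, which we reinterpret in terms of $w = (x_n - u)/\e$. If, in a ball $B_r(x_0)$ with $x_0 \in \overline{\Omega_k^+}$, one has $a_r \le w_k \le b_r$ on $B_r^+(x_0)$ with $b_r - a_r = \mathrm{osc}_r$ small relative to $1$, then the two alternatives of the corollary translate (after rescaling $u$ by the affine map $x_n \mapsto x_n - \e a_r$, resp. $x_n \mapsto x_n - \e b_r$, and dilating) into an improvement $\mathrm{osc}_{r/2} \le (1-\theta)\,\mathrm{osc}_r$ — with the caveat that the lower alternative requires the oscillation to be realized ``below'', i.e. $-a_r \ge b_r$ in the normalization, so one must carefully track which side the improvement is allowed on, exactly as in the remark preceding Corollary \ref{lem:dim_osc1}. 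Iterating from the starting oscillation bound $0 \le w_k \le 1$ (valid on $B_{1/2}^+$ by \eqref{eq:flat_hyp}) yields $\mathrm{osc}_{B_{2^{-j}}^+(x_0)} w_k \le C 2^{-\beta j}$ for all $j$ with $2^{-j} \ge \e_k$ — that is, a uniform $C^{0,\beta}$ bound valid down to scale $\e_k$, with $\beta$ and $C$ independent of $k$.

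Next I would combine this with the classical interior Harnack inequality for $L_k$-harmonic functions: away from $\{x_n = 0\}$, $w_k$ is $L_k$-harmonic in $\Omega_k^+$, and since $\{x_n > \e_k\} \cap B_1 \subseteq \Omega_k^+ \subseteq \{x_n > -\e_k\} \cap B_1$, at any point $x_0$ with $(x_0)_n \ge 2\e_k$ we may run the standard oscillation-decay iteration (using $a^{ij}_k \in C^\a$ with norms controlled by \eqref{eq:hyp_calpha}) all the way down to scale zero. The two mechanisms splice: for a point with $(x_0)_n = t$, use the free-boundary diminish of oscillation for scales between $1$ and $\max(t,\e_k)$ and the interior Harnack for scales below $t$ (when $t > \e_k$). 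The net effect is a uniform modulus of continuity $\omega$, independent of $k$, for $w_k$ on $\overline{\Omega_k^+} \cap \bar B_{1/2}$, \emph{except} possibly on the shrinking strip $\{|x_n| \le \e_k\}$, where the estimate only controls oscillation down to scale $\e_k$ and the set $\overline{\Omega_k^+}$ itself is only pinched to $\{x_n = 0\}$ in the limit.

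To pass to the limit I would phrase compactness at the level of graphs $G_k \ss \bar B_{1/2} \times [0,1]$, which are compact subsets of a fixed compact metric space; by Blaschke's selection theorem a subsequence converges in the Hausdorff distance to some compact set $G_\infty$. It remains to identify $G_\infty$ as the graph of a function $\bar w \in C(\bar B_{1/2}^+)$. Since $\overline{\Omega_k^+} \to \bar B_{1/2}^+$ in the Hausdorff sense (from the sandwiching $\{x_n>\e_k\} \subseteq \Omega_k^+ \subseteq \{x_n>-\e_k\}$), the projection of $G_\infty$ onto the $x$-variable is exactly $\bar B_{1/2}^+$. The uniform modulus $\omega$ on $\{x_n \ge \e_k\}$ forces: (i) single-valuedness of $G_\infty$ over the open half-ball $B_{1/2}^+$, because two points of $G_k$ over nearby base points have vertical separation $\le \omega(\text{dist}) + o(1)$ once we are at height $\gg \e_k$; (ii) equicontinuity of the resulting function $\bar w$ on $B_{1/2}^+$ with modulus $\omega$; and (iii) uniform continuity up to $\{x_n = 0\}$ — here I would argue that for a base point $x'$ on the thin ball, any two points of $G_k$ lying over a small neighborhood of $(x',0)$ have heights close to one another because we can connect them through a path that rises to height $\sim r$ (where $r$ is the neighborhood radius), apply the free-boundary oscillation decay on that scale, and note $\mathrm{osc} \le C r^\beta \to 0$. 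Thus $\bar w$ extends continuously to $\bar B_{1/2}^+$ with the same modulus, $G_\infty$ is its graph, and the convergence $G_k \to \mathrm{graph}(\bar w)$ in Hausdorff distance is established.

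The main obstacle I anticipate is item (iii), the control near the thin set: away from $\{x_n=0\}$ everything is governed by honest interior Harnack and one-sided free-boundary Harnack estimates, but near $\{x_n=0\}$ the domains $\Omega_k^+$ may be wildly oscillating within the strip $\{|x_n| < \e_k\}$, $u_k$ fails to be a supersolution there, and only the subsolution (upper-barrier) half of the diminish-of-oscillation survives — so one genuinely must extract the two-sided oscillation bound from a one-sided improvement, which is precisely why the hypothesis $0 < a \le b$ appears in Corollary \ref{lem:dim_osc1}. Making the iteration bookkeeping consistent (always improving on the ``larger'' side, and checking that this still yields geometric decay of the full oscillation, not just of one endpoint) is the delicate point; the rest is a routine Arzelà–Ascoli/Blaschke packaging.
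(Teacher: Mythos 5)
Your proposal takes essentially the same route as the paper's (implicit) argument for this corollary: iterate the diminish-of-oscillation estimates (Corollary \ref{lem:dim_osc1}) together with the interior Harnack inequality to obtain a uniform H\"older modulus of continuity for $w_k$ away from $\{x_n=0\}$ and down to scale $\e_k$ near it, then pass to the limit via an Arzel\`a--Ascoli/Blaschke argument on graphs. Two minor normalization slips that do not affect the structure: the starting bound from \eqref{eq:flat_hyp} is $-1 \le w_k \le 1$, not $0 \le w_k \le 1$, and the constraint $a \le b$ in Corollary \ref{lem:dim_osc1} translates in the $w$-normalization to $-a_r \le b_r$, not $-a_r \ge b_r$.
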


Notice that in the previous corollary the domains of definition of $w_k$ vary with $k$, however they converge to $\bar B_1^+$. 

\begin{lemma}
The function $\bar w$ solves the Signorini Problem \eqref{SP} (in the viscosity sense).
\end{lemma}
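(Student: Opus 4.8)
The plan is to verify the four conditions of the Signorini problem \eqref{SP} in the viscosity sense for the limit function $\bar w$, using the compactness from Corollary \ref{cor:cpt} together with a standard argument that viscosity subsolutions/supersolutions pass to limits. First I would note that away from $\{x_n=0\}$, inside $\W_k^+$, each $w_k = (x_n-u_k)/\e_k$ satisfies $L_k w_k = 0$ in the region $\{w_k < x_n/\e_k\}$, which after the flatness normalization covers a fixed neighborhood of any interior point once $\e_k$ is small; since $a^{ij}_k \to \d^{ij}$ by {\normalfont(\hyperref[eq:hyp_calpha]{$C_{\e_k,\d_k}$})} and $w_k \to \bar w$ locally uniformly, the equation passes to the limit and $\D \bar w = 0$ in $B_1^+$. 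The bound $w_k \ge 0$ on $\bar F_k$ together with the convergence of graphs forces $\bar w \ge 0$ on $B_1'$: a point of $B_1'$ in the limit domain is a limit of points of $\overline{\W_k^+}$, and either it is a limit of free boundary points (where $w_k\ge 0$) or a limit of interior points with $x_n \to 0$, and in the latter case $w_k(x) = (x_n - u_k)/\e_k \ge (x_n - x_n)/\e_k$... more carefully one uses $u_k \le x_n+\e_k$, giving $w_k \ge -1$, which is not enough, so instead I would use that near $\{x_n=0\}$ the lower flatness $u_k\ge (x_n-\e_k)_+$ combined with the fact that on $\L_k$ one has $u_k=0$, hence $w_k = x_n/\e_k \ge 0$ there, and on $\bar F_k \setminus \L_k$ again $w_k\ge 0$ by definition; since $\bar F_k$ Hausdorff-converges into $B_1'$, we conclude $\bar w\ge 0$ on $B_1'$.

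Next I would treat the Signorini boundary conditions $\p_n \bar w = 0$ on $\{\bar w>0\}\cap B_1'$ and $\p_n\bar w \le 0$ on $B_1'$ in the viscosity sense. For the inequality $\p_n\bar w\le 0$: suppose a $C^1$ test function $P$ touches $\bar w$ from above at $x_0\in B_1'$ with $\p_n P(x_0) > 0$. Then $x_n - \e_k P(x-x_0+\cdot)$, appropriately shifted, is a strict comparison supersolution of the one-phase problem (its gradient has modulus $< Q_k$ near $x_0$ for $k$ large, by the computation in the introduction $\p_n w = \frac{\e}{2}|Dw|^2$ together with $\p_n P>0$), and it touches $u_k$ from above at a nearby point of $\L_k$ — contradicting that $u_k$ solves \eqref{eq:def}, in particular the condition $|D_a u_k|\ge Q$ on $\L$. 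For $\p_n\bar w=0$ on $\{\bar w>0\}\cap B_1'$: if $\bar w(x_0)>0$ then the limit domains stay uniformly away from $x_0$ in the $x_n$ direction — i.e. $x_0$ lies on $\bar F$ but $\L$ has separated, so near $x_0$ the free boundary $F_k$ is an honest free boundary where $|D_a u_k| = Q$, and the same touching argument with test functions from above and from below (now there is no constraint) yields both $\p_n\bar w\le 0$ and $\p_n\bar w\ge 0$, hence equality. This uses that $F_k\cap\{x_n>0\}$ is where $u_k$ is a genuine two-sided viscosity solution.

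The main obstacle I expect is the transfer of the viscosity conditions through the limit at points on $B_1'$, because the domains $\W_k^+$ and the free boundaries $\bar F_k$ move with $k$ and the coefficients vary, so one must be careful that a test function touching $\bar w$ at $x_0$ can be perturbed (by a small vertical translation and a small quadratic opening) into a genuine strict comparison sub/supersolution of the $k$-th one-phase problem touching $u_k$ at a point that actually lies on $\bar F_k$ (resp.\ $\L_k$) rather than in the interior or in the zero set. This is the standard difficulty in these compactness arguments and is handled by sliding the perturbed test function until first contact and checking where the contact occurs, exactly as in the proof of Lemma \ref{lem:sigma_in_gamma} and as in \cite{MR2813524, MR3485136}; the Hausdorff convergence of the graphs $G_k$ guarantees that a strict contact for $\bar w$ persists as a contact for $w_k$ with $k$ large. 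The interior harmonicity $\D\bar w=0$ and the openness of $\{\bar w>0\}$ in $B_1'$ are comparatively routine given the Hölder estimates already established. Once all four conditions of \eqref{SP} are verified in the viscosity sense, the lemma follows.
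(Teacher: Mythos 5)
Your overall strategy matches the paper's: pass the interior equation to the limit, get the sign of $\bar w$ on $B_1'$ from $w_k\ge 0$ on $\bar F_k$, and transfer the Signorini condition by building one‑phase comparison functions $\varphi_k = x_n-\e_k P$ out of a test polynomial $P$ for $\bar w$. However, the touching direction in your central step is reversed, and as written the argument does not establish the inequality $\p_n\bar w\le 0$.

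Concretely: to prove $\p_n\bar w\le 0$ on $B_1'$ in the viscosity sense one must show that \emph{if $P$ touches $\bar w$ from below at $x_0\in B_1'$, then $\p_n P(x_0)\le 0$.} Your version (``$P$ touches $\bar w$ from above with $\p_n P(x_0)>0$ gives a contradiction'') is not a viscosity formulation of $\p_n\bar w\le 0$; for instance $\bar w\equiv 0$ with $P(x)=x_n+x_n^2$ satisfies it with $\p_n P(0)>0$ and no contradiction. Moreover the direction propagates incorrectly to $u_k$: if $\bar w\le P$ near $x_0$, then (using the graph convergence) $w_k\lesssim P$, so $u_k=x_n-\e_k w_k\gtrsim x_n-\e_k P=\varphi_k$, i.e.\ $\varphi_k$ touches $u_k$ from \emph{below}, not from above as you claim, and the subsolution condition $|D_a u_k|\ge Q$ on $\L_k$ cannot be invoked. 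The correct chain (as in the paper) is: $P$ touches $\bar w$ from \emph{below} $\Rightarrow$ $\varphi_k$ (after a small vertical shift and the quadratic opening $-\eta x_n-C(|x'|^2-nx_n^2)$, plus a correction $\widetilde\varphi_k$ to make it $L_k$‑superharmonic) touches $u_k$ from \emph{above} at some $x_k\to x_0$; the strict superharmonicity rules out contact in $\W_k^+$, so $x_k\in \bar F_k$; and then $|D_{a_k}\widetilde\varphi_k(x_k)|\ge Q_k(x_k)$ (equality case on $F_k$ or the constraint on $\L_k$) together with $|D\varphi_k|^2\approx 1-2\e_k(\p_nP-\eta)$ forces $p_n\le 0$.

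Two smaller inaccuracies: (i) $L_k w_k\ne 0$ in $\W_k^+$; rather $L_k w_k=\e_k^{-1}L_k x_n=\e_k^{-1}\p_i(a_k^{in}-\d^{in})$, which is $O(\d_k)$ under {\normalfont(\hyperref[eq:hyp_calpha]{$C_{\e_k,\d_k}$})} and hence tends to $0$, giving $\D\bar w=0$ in the limit — same conclusion, but the intermediate identity as you stated it is false. (ii) You assert the contact point lies on $\L_k$; in general you cannot know this a priori, but it is immaterial since both alternatives ($x_k\in F_k$, where $|D_au_k|=Q$, or $x_k\in\L_k$, where $|D_au_k|\ge Q$ in the viscosity sense) yield the same lower bound on $|D_{a_k}\widetilde\varphi_k|$. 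With the touching direction corrected and these points fixed, your argument aligns with the paper's proof.
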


\begin{proof}
Since $u_k = x_n - \e_k w_k$ and $L_ku_k=0$ we find that $$L_kw_k=\frac{1}{\e_k}L_k x_n=\frac{1}{\e_k} \partial_i(a^{in}_k-\delta^{in}).$$
From {\normalfont(\hyperref[eq:hyp_calpha]{$C_{\e_k,\d_k}$})} we see that as $w_k \to \bar w$, $\delta_k,\e_k \to 0$ we obtain $\D \bar w =0$ in $B_1^+$. 

Since $w_k \ge 0$ on $\bar F_k$ we obtain that $\bar w \ge 0$ on $B_{1}'$. 

It remains to check that on $B_1'$ we satisfy the Signorini condition $\p_n \bar w \leq 0$ in the viscosity sense and we have equality over the positivity set of $\bar w$. Assume that $a+p\cdot x -C|x|^2$ touches $\bar w$ from below at a point $x_0 \in B'_{1/2}$, and assume for simplicity of notation that $x_0=0$. We need to show that $p_n \leq 0$.

Given $\eta>0$ we may assume that the polynomial $P(x) = a+p\cdot x -\eta x_n - C(|x'|^2-nx_n^2)$ touches $w$ strictly from below at $0$ in $B_r^+$ for some small $r$.

Let $\varphi_k = x_n -\e_k P$ and $\widetilde \varphi_k$ such that $L_k \widetilde \varphi_k = \D \varphi_k$ in $B^+_r$, and $\widetilde\varphi_k = \varphi_k$ on $\p B^+_r$. By {\normalfont(\hyperref[eq:hyp_calpha]{$C_{\e_k,\d_k}$})} and Schauder estimates we have $\|\widetilde\varphi_k-\varphi_k\|_{C^{1,\a}(B^+_{r/2})} \leq C \delta_k \e_k$.

By the convergence of $G_k$ to the graph of $w$, we get that for $k$ sufficiently large and some $d_k \in (-\xi,\xi)$, $P+(\varphi_k-\widetilde\varphi_k)/\e_k+d_k$ touches $w_k$ from below at some $x_k \in (\W_k^+ \cup F_k)\cap B_{r/2}$ with $x_k \to 0$. In other words, $\widetilde\varphi_k -\e_kd_k$ touches $u_k$ from above. Given that $L_k\widetilde\varphi_k = -\e_k\D P < 0$ we have that $x_k \in  F_k \cap B_{r/2}$. By the free boundary condition
\[
1- 2 \delta_k \e_k \leq |D_{a_k}\widetilde\varphi_k|^2 \leq |D\varphi_k|^2 + C \delta_k \e_k \leq 1 -2\e_k(p_n-\eta) + C(\delta_k \e_k +\e_k^2),
\]
which implies the desired bound for $p_n$ after we let $k \to \infty$ and then $\eta\to 0$.

A similar argument shows that $\p_n w \geq 0$ over $\{w>0\}\cap B_1'$.
\end{proof}

If we assume that $0 \in \p' \L_k$ then $w_k(0)=0$ and $\bar w(0)=0$. Since $\|\bar w\|_{L^\infty} \le 1$, the optimal $C^{1,1/2}$ regularity for the Signorini problem implies that 
\[
|\bar w(x)| \le C|x|^{3/2},
\]
for some $C$ universal. This implies that given $\beta \in (0,1/2)$, there exists $\mu$ small depending on $\beta$ and the other universal constants such that for all $k$ large
\begin{equation} \label{w}
|w_k| \le \mu^{1+\beta} \quad \mbox{in} \quad \overline{\Omega^+_k} \cap B_{\mu}.
\end{equation}

We have established the following improvement of flatness result.

\begin{lemma}\label{lem:growth}
Given $\b\in(0,1/2)$, there  exist $\e_0,\d,\mu$ depending on $\beta$, and the other universal constants such that if $0\in \p' \L$ and {\normalfont(\hyperref[eq:flat_hyp]{$F_{\e}$})} and {\normalfont(\hyperref[eq:hyp_calpha]{$C_{\e,\d}$})} for some $\e\in(0,\e_0)$ then
\[
x_n+\e\m^{1+\b} \ge u\geq (x_n -\e\m^{1+\b})_+ \text{ in } B_\m^+,
\]
i.e., the rescaling $\tilde u(x):=\mu^{-1}u(\mu x)$ satisfies  {\normalfont(\hyperref[eq:flat_hyp]{$F_{\tilde \e}$})} with $\tilde \e=\e \mu^\beta.$ 
\end{lemma}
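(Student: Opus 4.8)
The plan is to prove Lemma~\ref{lem:growth} by assembling the ingredients already developed in this section: the dichotomy of Lemma~\ref{lem:dim_osc}, the compactness of the $w_k$'s (Corollary~\ref{cor:cpt}), the identification of the limit as a solution of the Signorini problem, and the optimal $C^{1,1/2}$ regularity for that problem quoted from \cite{MR2120184}. The argument is by contradiction. First I would suppose the statement fails: fix $\b\in(0,1/2)$, and for every choice of small $\e_0,\d,\m$ there is a solution violating the conclusion. Quantifying over a sequence, this produces solutions $u_k$ satisfying (\hyperref[eq:flat_hyp]{$F_{\e_k}$}) and (\hyperref[eq:hyp_calpha]{$C_{\e_k,\d_k}$}) with $\e_k,\d_k\to 0$, with $0\in\p'\L_k$, but for which
\[
x_n+\e_k\m^{1+\b}\ge u_k\ge (x_n-\e_k\m^{1+\b})_+ \text{ in } B_\m^+
\]
fails for a fixed $\m$ (to be chosen). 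Equivalently, in terms of $w_k=(x_n-u_k)/\e_k$, the bound $|w_k|\le \m^{1+\b}$ in $\overline{\Omega_k^+}\cap B_\m$ fails.

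Next I would run the compactness machinery. After passing to a subsequence, Corollary~\ref{cor:cpt} gives that the graphs $G_k$ converge in Hausdorff distance to the graph of a H\"older function $\bar w\in C(\overline{B_{1/2}^+})$ with $\|\bar w\|_{L^\infty}\le 1$, and the subsequent lemma identifies $\bar w$ as a viscosity solution of the Signorini problem \eqref{SP} in $B_{1/2}^+$. Because $0\in\p'\L_k$ we have $w_k(0)=0$, and Hausdorff convergence of the graphs forces $\bar w(0)=0$. The optimal regularity result of Athanasopoulos and Caffarelli then yields $|\bar w(x)|\le C|x|^{3/2}$ for a universal $C$ (after the standard normalization using $\|\bar w\|_{L^\infty}\le 1$). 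Given $\b\in(0,1/2)$, since $3/2>1+\b$, one can pick $\m$ small, depending only on $\b$, $C$ and the universal constants, so that $C\m^{3/2}\le \tfrac12\m^{1+\b}$, hence $|\bar w|\le \tfrac12\m^{1+\b}$ in $B_\m$. By the Hausdorff convergence of $G_k$ to the graph of $\bar w$, for $k$ large we get $|w_k|\le \m^{1+\b}$ in $\overline{\Omega_k^+}\cap B_\m$, contradicting our choice of the $u_k$'s. This fixes $\m$, and then $\e_0,\d$ are whatever thresholds make the compactness step and the identification of the limit valid, which contradicts the assumed failure and proves the lemma. The final rescaling claim is immediate: $\tilde u(x)=\m^{-1}u(\m x)$ satisfies $x_n+\e\m^\b\ge \tilde u\ge (x_n-\e\m^\b)_+$ in $B_1^+$, which is exactly (\hyperref[eq:flat_hyp]{$F_{\tilde\e}$}) with $\tilde\e=\e\m^\b$.

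A few points need care. One must check that passing to the rescaled solution $\tilde u$ preserves the hypothesis (\hyperref[eq:hyp_calpha]{$C_{\tilde\e,\d}$}) so that the lemma can later be iterated (this is why $\a$ must exceed the relevant exponent, so that $\|a^{ij}-\d^{ij}\|_{C^\a}$ and $\|Q-1\|_{C^\a}$ scale favorably under $x\mapsto \m x$; since $u_k$ come with $C^\a$ control on the coefficients, the rescaled coefficients improve by a factor $\m^\a$, which beats the $\m^\b$ loss in $\e$ as long as $\b<\a$, consistent with the running assumption $\b\in(0,\min(1/2,\a))$). One must also make sure the constant $\m$ is chosen \emph{before} extracting the contradicting sequence, i.e. the quantifiers are ordered so that $\m=\m(\b)$ is universal while $\e_0,\d$ may depend on $\m$; this is the standard structure of a compactness-contradiction argument and causes no real difficulty.

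The main obstacle, already handled in the preceding lemma of the excerpt, is the identification of the limit $\bar w$ as a genuine viscosity solution of the Signorini problem—in particular verifying the boundary conditions $\p_n\bar w\le 0$ on $B_1'$ and $\p_n\bar w=0$ on $\{\bar w>0\}\cap B_1'$ in the viscosity sense, which requires the delicate barrier construction using the free boundary condition $|D_{a_k}u_k|=Q$ and the Schauder estimate $\|\widetilde\varphi_k-\varphi_k\|_{C^{1,\a}}\le C\d_k\e_k$. Since that lemma is available to us, the remaining work for Lemma~\ref{lem:growth} is essentially bookkeeping: combining Hausdorff convergence with the $C^{1,1/2}$ decay estimate for $\bar w$ and choosing $\m$ appropriately. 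The only genuinely quantitative step is the inequality $C\m^{3/2}\le\tfrac12\m^{1+\b}$, valid for $\m$ small since $1/2>\b$, which is where the restriction $\b<1/2$ enters and where the optimality of the exponent $1/2$ in the final theorem originates.
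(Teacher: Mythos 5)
Your argument is correct and is the same compactness-and-contradiction scheme the paper uses: extract a sequence with $\e_k,\d_k\to0$ violating the conclusion for a fixed $\mu$, pass to the Signorini limit $\bar w$ via Corollary~\ref{cor:cpt} and the identification lemma, invoke the Athanasopoulos--Caffarelli $C^{1,1/2}$ decay $|\bar w|\le C|x|^{3/2}$ at $0$, choose $\mu$ with $C\mu^{3/2}\le\tfrac12\mu^{1+\beta}$, and transfer back by Hausdorff convergence. Your remarks on quantifier ordering and on rescaling the coefficient hypothesis are exactly the two bookkeeping points the paper addresses (the latter in the sentence following the lemma), so the two proofs coincide.
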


The proof of the lemma follows by contradiction and compactness. If the statement fails for a sequence of $u_k$'s, and with corresponding $\e_k$, $\delta_k \to 0$, then we argue as above and find from \eqref{w} that the $u_k$'s do satisfy the conclusion of the lemma for all large $k$. 

We can iterate the lemma above provided that $\beta \le \alpha$ so that hypothesis  \eqref{eq:hyp_calpha} scales accordingly. 
We obtain that $u$ is pointwise $C^{1,\beta}$ at $0 \in \p'\L$ in the domain of definition, i.e.
$$|u-x_n| \le C \e |x|^{1+\beta} \mbox{ in $\overline{\Omega^+}$.}$$

Now it is standard to extend the $C^{1,\beta}$ regularity from $\p' \L$ to the whole domain of definition.

\begin{proposition}\label{thm:almost_opt}
Let $\b\in(0,\min(1/2,\a))$ and assume that $u$ satisfies \eqref{eq:hyp_calpha}, \eqref{eq:flat_hyp} for some $\e\in(0,\e_0)$. Then
\[
\|u\|_{C^{1,\b}( \overline{\Omega^+} \cap B_{1/2})} \leq C.
\]
\end{proposition}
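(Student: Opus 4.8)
The plan is to prove Proposition~\ref{thm:almost_opt} by combining the pointwise $C^{1,\beta}$ estimate at thin-boundary points established above with the interior result (Theorem~\ref{thm:classic}), and then interpolating the two regimes to get a uniform $C^{1,\beta}$ bound over $\overline{\Omega^+}\cap B_{1/2}$. First I would record what the iteration preceding the statement gives us: since $\beta\le\alpha$, the smallness hypothesis \eqref{eq:hyp_calpha} is preserved under the rescaling $\tilde u(x)=\mu^{-1}u(\mu x)$ (the $C^\alpha$ norms of $a^{ij}-\delta^{ij}$ and $Q-1$ pick up a factor $\mu^\alpha\le\mu^\beta$, which dominates the loss), so Lemma~\ref{lem:growth} may be applied at every dyadic scale centered at a point $x_0\in\p'\L$. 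Summing the geometric improvement $\tilde\e=\e\mu^\beta$ over scales yields the pointwise expansion
\[
|u(x)-Du(x_0)\cdot(x-x_0)| \le C\e\,|x-x_0|^{1+\beta}\qquad\text{for } x\in\overline{\Omega^+},
\]
with $|Du(x_0)|=Q(x_0)$, and the map $x_0\mapsto Du(x_0)$ is itself $C^\beta$ on $\p'\L$ by a standard comparison of the expansions at two nearby points.

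Next I would upgrade this to a neighborhood of $\p'\L$ inside $\overline{\Omega^+}$. Fix a free boundary point $y\in F\cap B_{1/2}$ and let $x_0\in\p'\L$ (or, if $y$ is far from $\L$, any convenient reference point) realize $d:=\mathrm{dist}(y,\p'\L)$. There are two cases. If $y$ lies in the "interior regime'', i.e. the ball $B_{d/2}(y)$ does not meet $Z$, then the pointwise expansion at $x_0$ shows that in $B_{cd}(y)$ the function $u$ is flat of order $\e(d)^{\,}$ about the hyperplane $\{Du(x_0)\cdot(x-x_0)=0\}$ with flatness parameter comparable to $\e\,d^{\beta}$ (after rescaling $B_{cd}(y)$ to unit size); hence Theorem~\ref{thm:classic} applies at scale $d$ and gives $\|u\|_{C^{1,\beta}(B_{cd/2}(y))}\le C\e\,d^{\beta}$ in rescaled terms, i.e. a genuine $C^{1,\beta}$ bound with constant independent of $d$. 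If instead $y$ is within distance $d/2$ of $\p'\L$, then the pointwise $C^{1,\beta}$ expansion at the nearby thin-boundary point already controls $u$ in a fixed fraction of that ball. Combining the two cases through the usual partition-into-dyadic-annuli argument (tracking how the $C^{1,\beta}$ seminorm of $u$ on the annulus $\{d(x,\p'\L)\sim 2^{-j}\}$ behaves) yields the uniform estimate $\|u\|_{C^{1,\beta}(\overline{\Omega^+}\cap B_{1/2})}\le C$; the boundary portion $\L\subset Z$ contributes nothing extra because there $u\equiv0$ and $x_n\equiv0$, and on $\{|Du|>Q\}\cap\L$ the set is interior-regular by the discussion following Lemma~\ref{lem:sigma_in_gamma}.

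I expect the main obstacle to be the bookkeeping in the interplay between the two scales, namely verifying that the flatness hypothesis of Theorem~\ref{thm:classic} is actually met at scale $d=\mathrm{dist}(y,\p'\L)$ with a flatness parameter that, after rescaling, is small (so that $\e_0$ in that theorem is respected) yet produces a $C^{1,\beta}$ constant that does not blow up as $d\to0$. Concretely one must check that rescaling $u$ from $B_{cd}(y)$ to $B_1$ turns the pointwise bound $|u(x)-\ell(x)|\le C\e\,|x-x_0|^{1+\beta}$ into $B_1\cap\{x_n>-\e'\}\supseteq\tilde\Omega^+\supseteq B_1\cap\{x_n>\e'\}$ with $\e'\sim C\e\,d^{\beta}\to0$, that $|Du(x_0)|=Q(x_0)$ lets us normalize the slope to $1$ at the expense of another $C^\alpha$ error absorbed into $\delta\e$, and that the resulting $C^{1,\beta}$ bound on the rescaled free boundary, transported back, is exactly scale-invariant. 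The remaining steps — the telescoping sum giving the pointwise expansion, the Hölder continuity of $x_0\mapsto Du(x_0)$, and the dyadic gluing — are routine once this matching is in place.
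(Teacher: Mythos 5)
Your overall strategy is the same as the paper's: establish pointwise $C^{1,\beta}$ at points of $\partial'\L$ by iterating Lemma~\ref{lem:growth}, then at a general point $y$ with $r=\operatorname{dist}(y,\partial'\L)$ use that expansion for scales $\ge r$ and a local estimate for scales $<r$, gluing over dyadic scales. The scaling check on \eqref{eq:hyp_calpha} and the use of Theorem~\ref{thm:classic} in the unconstrained regime match what the paper does. But there is a real gap in the sub-$r$ case analysis.

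The proposition estimates $\|u\|_{C^{1,\b}}$ on all of $\overline{\W^+}\cap B_{1/2}$, whereas you only treat $y\in F$. More to the point, the paper distinguishes \emph{three} local regimes at scale $<r$, and you are missing the one in which $y$ lies near the \emph{interior} of the coincidence set $\L$ (in the paper's notation, $y_n\le |y'|/2$ and $B'_{|y'|}(y')\subset\L$). There the nearby boundary piece is $\L\subset Z$ carrying the Dirichlet data $u=0$, and the correct tool is Schauder regularity up to a $C^{1,\a}$ Dirichlet boundary — neither interior Schauder nor DFS, and certainly not the pointwise expansion at the (far away) point $x_0\in\partial'\L$, which only provides a linear approximation with error $O(|x-x_0|^{1+\b})$ and does not control the $C^{1,\b}$ seminorm of $u$ at $y$ on scales below $d$. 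Your remark that ``$\L\subset Z$ contributes nothing extra because there $u\equiv 0$'' only speaks to points on $\L$, not to the behaviour of $u$ at points of $\W^+$ that hover over $\L$, which is precisely where the estimate is nontrivial. Relatedly, your second alternative ``if $y$ is within distance $d/2$ of $\partial'\L$'' is vacuous, since $d$ is by definition $\operatorname{dist}(y,\partial'\L)$; the intended dichotomy must be whether $B_{d/2}(y)$ meets $Z$ and, if so, whether the nearby slice of $Z$ lies in $\L$ (Dirichlet--Schauder regime) or outside $\L$ (unconstrained/DFS regime). Once that trichotomy is spelled out, the rest of your bookkeeping (scaling of the coefficients, flatness at scale $d$, dyadic gluing) goes through essentially as in the paper.
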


Notice that the estimate above implies that the free boundary $\bar F \in C^{1,\beta}$ as well.

\begin{proof}
It suffices to show that $u$ is pointwise $C^{1,\beta}$ at all points $y \in \overline{\Omega^+} \cap B_{1/2}$. We look at the distance $r$ from $y$ to $\p' \L$, and assume for simplicity that the distance is realized at $0 \in \p' \L$. We assume without loss of generality that $F\cap B_{1/2}\ss \{x_n<|x'|\}$, which follows from Lemma \ref{lem:growth} after a suitable dilation.

By Lemma \ref{lem:growth} $u$ is approximated in a $C^{1,\beta}$ fashion by $x_n$ in balls of radius greater than $r$ centered at $y$.
To check that $u$ is approximated at scales smaller than $r$ we distinguish three cases. 

If $y_n > |y'|/2$ then the desired conclusion follows by interior Schauder estimates.
 
If $y_n \le |y'|/2$ and $B'_{|y'|}(y') \subset \L$ then the conclusion follows by Schauder estimates up to the boundary.

 If $y_n \le |y'|/2$ and $B'_{|y'|}(y') \cap \L=\emptyset$ then $F$ is unconstrained in $B^+_{r/2}(y',0)$. Now the estimates in \cite{MR3485136} apply, or alternatively we could repeat the arguments of Lemma \ref{lem:growth} in the unconstrained setting.
\end{proof}

\begin{remark}\label{r2}
In terms of the function $w$ the estimate we obtained is
$$\e\|w\|_{L^\infty(\Omega^+ \cap B_1)}\leq \e_0 \qquad\Rightarrow\qquad \|w\|_{C^{1,\b}( \overline{\Omega^+} \cap B_{1/2})} \leq C \|w\|_{L^\infty(\Omega^+ \cap B_1)}.$$
\end{remark}

\section{Optimal regularity}\label{sec:opt}

In this section we will establish Theorem \ref{thm:main1}. We assume that $u$ is a solution of \eqref{eq:def} for $$a^{ij}=\d^{ij}, \quad  \W=B_1 \cap \{x_n>g(x')\}, \quad \mbox{and} \quad Z=\{x_n = g(x')\} \cap B_1,$$
 where $g \in C^{1,1/2+\s}(B_1')$ for some small $\s>0$ and
\[
g(0)=0, \qquad D'g(0)=0, \qquad \|g\|_{C^{1,1/2+\s}(B'_1)} \leq 1.
\]
We consider $Q\in C^{0,1}(B_1)$ satisfying
\[
 Q(0)=1, \qquad \|Q-1\|_{C^{0,1}(B_1)}\leq 1,
\]
and assume $0\in\p'\L$, hence $Du(0)=e_n$.

In view of the previous section $u \in C^{1,\b}(\W^+\cap \bar F)$ for some $\b\in(0,1/2)$ that we choose sufficiently close to $1/2$ so that $\b\in(1/2-\s/10,1/2)$.

To establish the $C^{1,1/2}$ regularity of $\bar F$ we follow the strategy from \cite{MR2367025, MR2558329} applied to the function $w$ defined in the previous section as
\[
 w := x_n-u,
\]
and we suppose without loss of generality that
\[
 \|w\|_{C^{1,\b}(\bar \W)} \leq 1.
\]
Since $w(0)=0$, $Dw(0)=0$ we have
\begin{equation}\label{41}
w=O(r^{1+\beta}), \quad |Dw|=O(r^\beta) \quad \mbox{in} \quad \bar \Omega \cap B_r.
\end{equation}
Moreover, the free boundary condition $|Du|=Q$ on $F$ implies $\p_n w=O(r^{2\beta})$ on $F \cap B_r$, or
\begin{equation}\label{42}
\p_\nu w=O(r^{2 \beta}) \quad \mbox{and} \quad w \ge 0 \quad \mbox{on} \quad  F \cap B_r,
\end{equation}
where $\nu$ is the outward normal to $\W^+$. On the remaining part $\L$ of $\bar F \cap B_r$ (where $\bar F$ coincides with $Z$) we have $w=g(x')$ and $|Du| \ge Q$. We easily deduce
\begin{equation}\label{43}
w=O(r^{\frac 32 + \sigma}), \quad Dw \cdot x= O(r^{\frac 32 + \sigma}), \quad \p_\nu w  \ge - C r^{2\beta} \quad \mbox{on} \quad  \L \cap B_r.  
\end{equation}
Combining the inequalities above we find
\begin{equation}\label{44}
w\p_\nu w =O(r^{1+ 3 \beta}+r^{\frac 3 2 + \beta + \sigma}) =O(r^{2+\sigma/2}) \quad \mbox{on} \quad  \bar F \cap B_r.
\end{equation}

The main goal is to use Almgren's monotonicity formula and show that for $r$ is sufficiently small
\begin{align}\label{eq:cthreehalf}
H(r) := \frac{1}{r^{n-1}} \int_{\p B_r(x_0)\cap\W^+}w^2 \leq Cr^{3} ,
\end{align}
from which we can easily deduce that $w=O(r^\frac 32)$.

Below we use the following convention for various average integrals over sets $E \subset \bar B_r$,
\[
\fint_E \, f = \frac{1}{r^d}\int_E f, \quad \quad  \text{ where } d = \dim(E),
\]
hence
\[
H(r) = \fint_{\p B_r\cap\W^+}w^2.
\]

\subsection{Almgren's frequency formula}
If $w$ is a homogeneous function we get that the homogeneity of $w$ can be computed from the \textit{frequency functional}
\[
N(r) = r\frac{d}{dr}\ln \1\fint_{\p B_r}w^2\2^{1/2}.
\]
Almgren's monotonicity formula says that if $w$ is harmonic near the origin, then $N$ is nondecreasing. Moreover, if $N$ remains constant, then $w$ is homogeneous of degree $N$.

Let us compute straightaway the derivative of $H$. In the following $\p_r$ denotes the radial derivative.
\begin{align*}
H'(r) &= 2\fint_{\p B_r\cap \W^+} w\p_r w - \frac{1}{r^2}\fint_{\p B_r\cap \bar F} w^2(x\cdot \nu)\\
&= 2r\fint_{B_r\cap\W^+}|Dw|^2 - 2\fint_{B_r\cap \bar F}w \, \p_\nu w  - \frac{1}{r^2}\fint_{\p B_r\cap \bar F} w^2(x\cdot \nu).
\end{align*}
In order to get an exact formula for the second derivatives we consider the following perturbation of $H$,
$$\widetilde H(r) := H(r) +\int_0^r (E_1(\r)+E_2(\r))\frac{d\r}{\r},$$
with (see \eqref{41}-\eqref{44})
\begin{alignat*}{2}
E_1(r) &:= \frac{1}{r}\fint_{\p B_r\cap \bar F} w^2(x\cdot \nu) = O(r^{2+3\b}),\\
E_2(r) &:= 2r\fint_{B_r\cap \bar F}w\p_\nu w = O(r^{3+\s/2}).
\end{alignat*}
Thus,
\begin{equation}\label{441}
\widetilde H (r)=H(r)+O(r^{3+\s/2}),\qquad \quad \widetilde{H}'(r) = 2r\fint_{B_r\cap\W^+}|Dw|^2,
\end{equation}
and we also have
\[
\frac{n-1}{r}\widetilde{H}'(r)+ \widetilde{H}''(r) = 2\fint_{\p B_r\cap\W^+}|Dw|^2.
\]
By the Rellich's identity $$(n-2)|Dw|^2-2(Dw\cdot x)\D w = \text{div}\1|Dw|^2 x - 2(Dw\cdot x)Dw\2,$$
we obtain
\begin{align*}
(n-2)\int_{B_r\cap\W^+}|Dw|^2 = r & \int_{\p B_r\cap\W^+}\1|Dw|^2 - 2(\p_r w)^2\2 \\
&+ \int_{B_r\cap \bar F} (|Dw|^2(x\cdot\nu)-2(Dw\cdot x)\, \p_\nu w ).
\end{align*}
Using \eqref{41}-\eqref{43} we find that on $\bar F \cap B_r$
$$|Dw|^2(x\cdot\nu)=O(r^{1+3 \beta}), \quad (Dw\cdot x)\, \p_\nu w = O(r^{\frac 32 + \s +\b}),$$
hence
$$ (n-2)\fint_{B_r\cap\W^+}|Dw|^2 =  \fint_{\p B_r\cap\W^+}\1|Dw|^2 - 2(\p_r w)^2\2 + O(r^{1+ \s /2}),$$
which gives
\begin{align}
\label{eq:4} \widetilde{H}''(r) + \frac{1}{r}\widetilde H'(r) &= 4\fint_{\p B_r\cap \W^+}(\p_r w)^2 + O(r^{1+\s/2}).
\end{align}

As in \cite{MR2367025, MR2558329} we consider now a truncated type of frequency
\[
\widetilde N(r) = \frac{r}{2}\frac{d}{dr}\ln \max(\widetilde H(r),r^{3+\s/10}),
\]
and show that it is almost monotone.

\begin{lemma}\label{lem:mono}
\[
\widetilde N'(r) \geq -Cr^{-1+\s/10} \widetilde N(r).
\]
\end{lemma}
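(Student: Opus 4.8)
\textbf{Proof strategy for Lemma \ref{lem:mono}.}

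The plan is to differentiate $\widetilde N$ directly and show that the ``main term'' has a sign, while all the error terms coming from the free boundary and from the curved geometry are controlled by $r^{-1+\s/10}\widetilde N(r)$. First I would dispose of the trivial regime: on the set where $\widetilde H(r) \le r^{3+\s/10}$, the maximum in the definition of $\widetilde N$ equals $r^{3+\s/10}$, so $\widetilde N(r) = \tfrac{3+\s/10}{2}$ is constant there and $\widetilde N'(r)=0$, which certainly satisfies the claimed inequality. So we may restrict attention to the open set where $\widetilde H(r) > r^{3+\s/10}$, where $\widetilde N(r) = \tfrac{r}{2}\,\widetilde H'(r)/\widetilde H(r)$. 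Here it is convenient to also record that, since $\widetilde N(r)\to N(0)\ge 3/2$ as $r\to 0$ (the pointwise $C^{1,\b}$ bound \eqref{41} forces $\widetilde H(r)=O(r^{2+2\b})$, and the free boundary conditions make the frequency at least $3/2$ in the limit), we have a two-sided bound $c \le \widetilde N(r) \le C$ on the relevant range of small $r$; thus it suffices to prove $\widetilde N'(r) \ge -Cr^{-1+\s/10}$.

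Next I would compute $\widetilde N'$. Writing $\widetilde D(r) := \tfrac12 r\,\widetilde H'(r) = r^2\fint_{B_r\cap\W^+}|Dw|^2$ by \eqref{441}, so that $\widetilde N = \widetilde D/\widetilde H$, the standard manipulation gives
\[
\frac{\widetilde N'(r)}{\widetilde N(r)} = \frac{\widetilde D'(r)}{\widetilde D(r)} - \frac{\widetilde H'(r)}{\widetilde H(r)}.
\]
For the $\widetilde H$-term one uses \eqref{441} again: $\widetilde H'/\widetilde H = \tfrac{2}{r}\widetilde N$. For the $\widetilde D$-term, the identity \eqref{eq:4} rewrites as
\[
\widetilde D'(r) = \frac{2}{r}\widetilde D(r) + 2r\fint_{\p B_r\cap\W^+}(\p_r w)^2 \cdot r + O(r^{2+\s/2}),
\]
after multiplying \eqref{eq:4} by $r^2/2$ and using $\widetilde D = \tfrac12 r \widetilde H'$; care with the powers of $r$ and the normalization of the averages is needed here, but it is routine. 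The key structural fact is then the Cauchy–Schwarz inequality
\[
\left(\fint_{\p B_r\cap\W^+} w\,\p_r w\right)^2 \le \left(\fint_{\p B_r\cap\W^+} w^2\right)\left(\fint_{\p B_r\cap\W^+}(\p_r w)^2\right),
\]
which, together with the fact that $\fint_{\p B_r\cap\W^+} w\,\p_r w$ differs from $\tfrac12 H'(r)$ — hence from $\tfrac1r\widetilde D(r)$ — only by the controlled errors $E_1,E_2$ of \eqref{441}, yields $\widetilde D(r)^2 \le \big(r^2\widetilde H(r)\fint_{\p B_r\cap\W^+}(\p_r w)^2\big)(1+O(r^{\s/2}))$. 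Plugging this into the expression for $\widetilde D'/\widetilde D$ produces $\widetilde D'/\widetilde D \ge \tfrac{2}{r}\widetilde N + (\text{error})$, and subtracting $\widetilde H'/\widetilde H = \tfrac{2}{r}\widetilde N$ cancels the main terms, leaving exactly an error of the claimed order.

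The main obstacle is bookkeeping the error terms and verifying they all carry at least the exponent $-1+\s/10$ relative to $\widetilde N$. There are several sources: the perturbation terms $E_1 = O(r^{2+3\b})$ and $E_2 = O(r^{3+\s/2})$ in the definition of $\widetilde H$ and in the formula for $\widetilde H'$; the $O(r^{1+\s/2})$ remainder in \eqref{eq:4} coming from Rellich's identity on the curved boundary $\bar F$; and the discrepancy between $\fint_{\p B_r\cap\W^+} w\p_r w$ and $\tfrac12 H'(r)$. One must divide each of these by $\widetilde H(r)$ (or $\widetilde D(r)$), and here the truncation $\max(\widetilde H(r), r^{3+\s/10})$ earns its keep: on the relevant set $\widetilde H(r) \ge r^{3+\s/10}$, so dividing, say, the $O(r^{3+\s/2})$ error by $\widetilde H(r)$ gives $O(r^{\s/2 - \s/10}) = O(r^{2\s/5})$, which is far better than $O(r^{\s/10})$; similarly $O(r^{2+3\b})/r^{3+\s/10} = O(r^{3\b - 1 - \s/10})$, and since $\b > 1/2 - \s/10$ this is $O(r^{1/2 - 2\s/5})$, again acceptable. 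Since $\widetilde N(r)$ is bounded below by a positive constant, absorbing these into $-Cr^{-1+\s/10}\widetilde N(r)$ is then immediate. I would also need the elementary measure-theoretic facts that $\widetilde H$ is absolutely continuous and that $r \mapsto \max(\widetilde H(r), r^{3+\s/10})$ is differentiable a.e. with the expected derivative, so that $\widetilde N'$ makes sense a.e. and the differential inequality can later be integrated.
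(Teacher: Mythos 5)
Your strategy is the paper's: take the logarithmic derivative of $\widetilde N$, substitute \eqref{441} and \eqref{eq:4}, apply the Cauchy--Schwarz inequality on $\partial B_r\cap\Omega^+$, and use the truncation to bound the normalizers from below so the remainders land in $O(r^{-1+\sigma/10})$. Introducing $\widetilde D=\tfrac12 r\widetilde H'$ is only cosmetic, since $\widetilde D'/\widetilde D = \tfrac1r + \widetilde H''/\widetilde H'$ is what the paper writes directly. Handling the trivial regime explicitly by noting $\widetilde N\equiv(3+\sigma/10)/2$ and hence $\widetilde N'=0$ on $\{\widetilde H<r^{3+\sigma/10}\}$ is a cleaner statement of what the paper leaves implicit with ``we focus on the case $\widetilde H(r)>r^{3+\sigma/10}$.''

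Two corrections are needed. First, the appeal to a lower bound $c\le\widetilde N(r)$ and the reduction to ``it suffices to prove $\widetilde N'(r)\ge-Cr^{-1+\sigma/10}$'' should be dropped: the bound $\widetilde N(0^+)\ge 3/2$ is inequality \eqref{N0}, which the paper establishes \emph{after} and \emph{using} this lemma (via the blowup analysis), so invoking it here is circular. Fortunately it is also unnecessary, since your own computation produces $\widetilde N'(r)/\widetilde N(r)\ge -Cr^{-1+\sigma/10}$ directly, which upon multiplying through by $\widetilde N(r)$ is exactly the claim.

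Second, the error accounting is not as routine as the phrase ``care with the powers of $r$'' suggests, and the quoted bounds $O(r^{2\sigma/5})$ and $O(r^{1/2-2\sigma/5})$ are only the relative errors; what actually enters $\widetilde N'/\widetilde N$ is these multiplied by a factor of order $r^{-1}$ (since $\widetilde H'/\widetilde H\sim 2\widetilde N/r$). To make the division rigorous one needs the quantitative lower bounds $\widetilde H'(r)\ge c_0 r^{2+\sigma/10}$ and $\fint_{\partial B_r\cap\Omega^+}w\,\partial_r w\ge c r^{2+\sigma/10}$ — this is precisely the content of Lemma~\ref{lem:aux} and Corollary~\ref{c1}, which your write-up does not invoke — together with the upper bound $\fint_{\partial B_r\cap\Omega^+}(\partial_r w)^2=O(r^{2\beta})$ from \eqref{41}. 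These are what let you replace $\widetilde H'$ by $2\fint w\partial_r w$ and $\widetilde H$ by $\fint w^2$ in the Rayleigh-quotient terms at a cost of $O(r^{-1+\sigma/10})$, after which Cauchy--Schwarz closes the argument.
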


First we establish an auxiliary result needed in the proof of Lemma \ref{lem:mono}.

\begin{lemma}\label{lem:aux}
If $\widetilde H(r) \ge r^{3+\s/10}$ then
\begin{align}
\label{eq:3}
\widetilde H'(r) = 2r\fint_{B_r\cap\W^+}|Dw|^2 \ge c_0r^{2+\s/10}.
\end{align}
\end{lemma}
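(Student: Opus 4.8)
The plan is to prove Lemma \ref{lem:aux} by a contradiction and compactness argument, using the non-degeneracy of $u$ on the contact set. Suppose the conclusion fails: there is a sequence $r_k \to 0$ with $\widetilde H(r_k) \ge r_k^{3+\sigma/10}$ but $r_k \fint_{B_{r_k} \cap \Omega^+} |Dw|^2 = o(r_k^{2+\sigma/10})$. Consider the rescalings
\[
w_k(x) := \frac{w(r_k x)}{\bigl(\widetilde H(r_k)\bigr)^{1/2}},
\]
so that $\fint_{\partial B_1 \cap \Omega_k^+} w_k^2 \sim 1$ (up to the $O(r_k^{3+\sigma/2})$ error in \eqref{441}, which is negligible once $\widetilde H(r_k) \ge r_k^{3+\sigma/10}$ and $\sigma$ is small), where $\Omega_k^+ := r_k^{-1} \Omega^+$. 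From \eqref{41}--\eqref{44} and $\widetilde H(r_k) \ge r_k^{3+\sigma/10}$ one gets uniform bounds: $w_k = O(|x|^{1+\beta}) / (\widetilde H(r_k))^{1/2}$ is bounded on compact sets since $1+\beta > 3/2 > \tfrac12(3+\sigma/10)$, and similarly $|Dw_k|$ is controlled. The domains $\Omega_k^+$ flatten to a half-space $\{x_n > 0\}$ because $g \in C^{1,1/2+\sigma}$ with $g(0) = 0$, $D'g(0) = 0$. Passing to a subsequence, $w_k \to w_\infty$ locally uniformly (and in $H^1_{loc}$) on the half-space, with $w_\infty$ harmonic there.

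The key point is to identify the limiting boundary conditions and derive a contradiction. The assumption $r_k \fint_{B_{r_k}\cap\Omega^+}|Dw|^2 = o(r_k^{2+\sigma/10})$ rescales to $\fint_{B_1 \cap \Omega_k^+} |Dw_k|^2 = o(1)$, so $Dw_\infty \equiv 0$ on $B_1 \cap \{x_n > 0\}$, i.e. $w_\infty$ is a constant $c$; since $w_k$ vanishes on a sequence of points of $\partial'\L_k$ near the origin (as $0 \in \partial'\L$), we must have $c = 0$. But then $\fint_{\partial B_1 \cap \{x_n>0\}} w_\infty^2 = 0$, contradicting the normalization $\fint_{\partial B_1 \cap \Omega_k^+} w_k^2 \sim 1$ which should pass to the limit. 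The one subtlety is that $w_k$ could concentrate its $L^2(\partial B_1)$ mass in a way that is lost in the limit; to rule this out I would use the energy identity \eqref{441}, namely $\widetilde H'(r) = 2r \fint_{B_r\cap\Omega^+}|Dw|^2$, which (after rescaling) controls $\widetilde H(r_k(1+t)) - \widetilde H(r_k)$ by an integral of the rescaled Dirichlet energies; combined with the differential inequality structure this gives that $\widetilde H$ cannot oscillate too wildly, forcing the trace norms of the $w_k$ to converge and hence the limit to inherit the normalization.

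An alternative and cleaner route — which I would try first — avoids full compactness: use directly that $\widetilde H'(r) = 2r\fint_{B_r\cap\Omega^+}|Dw|^2$ from \eqref{441} together with a Poincaré-type (or Rellich-type) inequality on $B_r \cap \Omega^+$ relating $\fint_{\partial B_r \cap \Omega^+} w^2$ to $\fint_{\partial B_{r/2}\cap\Omega^+} w^2$ plus $r^2 \fint_{B_r\cap\Omega^+}|Dw|^2$ and the boundary error terms from \eqref{44}. If $\widetilde H'(r) = 2r\fint_{B_r\cap\Omega^+}|Dw|^2$ were much smaller than $r^{2+\sigma/10}$, then $w$ restricted to the annulus would be almost constant in the radial direction, and using the Signorini-type boundary condition on $\bar F$ together with $w \ge 0$ on $\bar F$ and $w(0)=0$, one forces $\widetilde H(r) = o(r^{3+\sigma/10})$ near that scale, contradicting the hypothesis $\widetilde H(r) \ge r^{3+\sigma/10}$. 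Concretely: write $\frac{d}{dr}\log \widetilde H(r) = \frac{\widetilde H'(r)}{\widetilde H(r)}$ and observe that the spectral gap of the mixed Dirichlet–Neumann Laplacian on the half-sphere (eigenvalue corresponding to homogeneity $3/2$) bounds $\fint_{\partial B_r\cap\Omega^+}(\partial_r w)^2$ and the tangential Dirichlet energy from below by $\tfrac{(3/2)(3/2+n-2)}{r^2}\widetilde H(r)$ up to lower-order errors; if the total Dirichlet energy is $o(r^{1+\sigma/10})$ while $\widetilde H(r) \ge r^{3+\sigma/10}$ this is inconsistent for $r$ small.

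The main obstacle I anticipate is the bookkeeping of the error terms: one must check that all the error contributions coming from \eqref{42}, \eqref{43}, \eqref{44} (which are $O(r^{2+\sigma/2})$, $O(r^{1+\sigma/2})$, etc.) are genuinely of lower order than the main terms $r^{2+\sigma/10}$ and $r^{3+\sigma/10}$ appearing in the statement — this is exactly why the exponent $3+\sigma/10$ (rather than $3$) is used in the definition of $\widetilde N$ and in the hypothesis, and why $\beta$ was chosen in $(1/2-\sigma/10, 1/2)$. Getting the chain of inequalities $2\beta > 3/2$-type comparisons lined up correctly, so that $c_0 r^{2+\sigma/10}$ survives on the right-hand side with a genuinely positive universal constant $c_0$, is the delicate part; the harmonicity of $w$ and the flatness of the domain are used only in soft ways.
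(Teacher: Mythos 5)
The paper proves this lemma by a \emph{direct} argument, not by compactness. The key steps are: (i) bound $\fint_{\p B_r\cap\W^+}(w^-)^2$ by $r^2\fint_{B_r\cap\W^+}|Dw|^2$ plus $O(r^{3+2\s})$, using that $w \ge -r^{3/2+\s}$ on $\bar F\cap B_r$ and a Poincar\'e/trace inequality for functions with nearly vanishing boundary values on the free boundary; (ii) introduce the harmonic replacement $h$ (harmonic in $B_r\cap\W^+$, Neumann on $\bar F$, equal to $w$ on $\p B_r$), observe $\int |Dh|^2\le\int |Dw|^2$, use $\p_\nu w\ge -Cr^{2\beta}$ and the maximum principle to get $h(0)\le w(0)+Cr^{1+2\beta}=Cr^{1+2\beta}$, and then — if $r^2\fint|Dh|^2\le c_0 r^{3+\s/10}$ — conclude that $\bar h$, the average of $h$, is $O(c_0 r^{3/2+\s/10})$; (iii) apply Poincar\'e to $h-\bar h$ to bound $\fint_{\p B_r\cap\W^+}(w^+)^2$ by $c_0r^{3+\s/10}$ plus errors, and obtain a contradiction with $\widetilde H(r)\ge r^{3+\s/10}$. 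The crucial mechanism is to convert $w(0)=0$ (via the Neumann data $\p_\nu w\ge -Cr^{2\beta}$ and the maximum principle) into a bound on the constant mode $\bar h$; Poincar\'e alone only controls oscillation around the mean, not the mean itself.

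Your two proposed routes both have gaps relative to this.

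In the compactness route, the justification of a uniform $L^\infty$ bound is incorrect: you write that $w_k=O(|x|^{1+\beta})/(\widetilde H(r_k))^{1/2}$ is bounded ``since $1+\beta>3/2>\tfrac12(3+\s/10)$,'' but both inequalities are false — $\beta\in(1/2-\s/10,1/2)$ gives $1+\beta<3/2$, and $\tfrac12(3+\s/10)=3/2+\s/20>3/2$. Combined with $\widetilde H(r_k)\ge r_k^{3+\s/10}$, the naive pointwise bound gives $|w_k(x)|\le C\,r_k^{\,\beta-1/2-\s/20}|x|^{1+\beta}$, whose prefactor \emph{blows up} as $r_k\to 0$. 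A genuine uniform $L^\infty$ bound for the rescalings requires a separate comparison argument of the type in Lemma~\ref{lem:l_infty}; it does not follow from the pointwise growth rate \eqref{41}. This matters because your conclusion ``$c=0$'' from $w_k(0)=0$ requires uniform continuity up to $\{x_n=0\}$ near the origin, which weak-$H^1$ plus strong-$L^2(\p B_1)$ convergence (trace compactness, which incidentally already resolves the ``mass loss'' subtlety you raise) do not by themselves provide.

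In the ``cleaner'' route, the picture is closer to the paper but the decisive ingredient is not articulated. A spectral gap / Poincar\'e inequality on the half-sphere can only control the part of $w$ that oscillates; it cannot rule out $w$ being close to a nonzero constant on $\p B_r\cap\W^+$, which would make $\widetilde H(r)$ large while the Dirichlet energy is tiny. To exclude that possibility you must use $w(0)=0$, and the way to propagate this to the average of $w$ on $\p B_r$ is precisely the harmonic-replacement comparison: with $\p_\nu w\ge -Cr^{2\beta}$, the Neumann-harmonic $h\le w - Cr^{2\beta}(x_n-r)$ has $h(0)\le Cr^{1+2\beta}$, and smallness of $\fint|Dh|^2$ plus interior estimates then force $\bar h$ to be small. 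Your proposal gestures at ``$w(0)=0$ forces $\widetilde H(r)=o(r^{3+\s/10})$'' but does not supply the mechanism that transfers the pointwise value at the origin into a bound on the spherical average. That is the heart of the lemma, and as written your proposal does not contain it.
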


\begin{proof}
 We obtain the lower bound in two steps. Using that $w \geq -r^{3/2+\s}$ over $\bar F\cap B_r$, we get that thanks to the Sobolev and trace inequality
\begin{align}
\label{eq:11}
r^2\fint_{B_r\cap\W^+}|Dw|^2 \geq c\fint_{\p B_r\cap\W^+} [(w+r^{3/2+\s})^-]^2 \geq c\fint_{\p B_r\cap\W^+} (w^-)^2 - Cr^{3+2\s}.
\end{align}

Next we consider the harmonic function $h$ in $\Omega^+ \cap B_r$ such that
\begin{alignat*}{3}
\D h &= 0 \text{ in } B_r \cap \W^+, \qquad &&\p_\nu h = 0 \text { on } B_r\cap \bar F, \qquad &&h=w \text { on } \p B_r \cap \W^+,
\end{alignat*}
and notice that $$\int_{B_r \cap \Omega^+} |Dw|^2 \ge \int_{B_r \cap \Omega^+} |Dh|^2.$$ 
By the maximum principle and using that $\p_\nu w \ge - C r^{2 \beta}$ on $\bar F$, we get that for some $C>0$,
\[
h+Cr^{2 \beta}(x_n-r)\leq w.
\]
Since $w(0)=0$, we find 
\begin{equation}\label{45}
h(0) \le C r^{1+ 2 \beta} \le C r^{3/2 + \s}.
\end{equation} 
Let us assume by contradiction that the conclusion does not hold. Then, by the standard $L^2$ estimates,
$$c_0r^{3+\sigma/10} \ge  r^2\fint_{B_r\cap\W^+}|Dh|^2  \ge c \| h- \bar h\|^2_{L^\infty (B_{r/2}\cap \bar \Omega^+)},$$
where $\bar h$ denotes the average of $h$ over $B_r\cap \W^+$. From \eqref{45} we find $\bar h \le Cc_0 r^{3/2 + \sigma/10},$
and by the Poincar\'e and trace inequality we obtain that
\begin{align}
\label{eq:12}
r^2\fint_{B_r\cap\W^+}|Dh|^2 \geq c\fint_{\p B_r\cap\W^+} (h-\bar h)^2 \geq c\fint_{\p B_r\cap\W^+} (w^+)^2 - Cc_0r^{3+\s/10}.
\end{align}
Now we reach a contradiction by combining \eqref{eq:11} and \eqref{eq:12}, provided that $c_0$ is chosen sufficiently small.
\end{proof}

\begin{corollary}\label{c1}
 If $\widetilde H(r) \ge r^{3+\s/10}$ then
 \[
  \fint_{\p B_r\cap \W^+}w\p_r w \ge c\, r^{2+\s/10}
 \]
\end{corollary}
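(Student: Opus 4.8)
The plan is to deduce Corollary \ref{c1} directly from Lemma \ref{lem:aux} together with the formula for $\widetilde H'(r)$ in \eqref{441}. Recall that
\[
\widetilde H'(r) = 2r\fint_{B_r\cap\W^+}|Dw|^2,
\]
while on the other hand the computation of $H'(r)$ at the beginning of the subsection gives
\[
H'(r) = 2\fint_{\p B_r\cap\W^+}w\p_r w - \frac{1}{r^2}\fint_{\p B_r\cap\bar F}w^2(x\cdot\nu),
\]
and by the definition $\widetilde H = H + \int_0^r(E_1+E_2)\,d\r/\r$ we have $\widetilde H'(r) = H'(r) + (E_1(r)+E_2(r))/r$. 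Thus
\[
2\fint_{\p B_r\cap\W^+}w\p_r w = \widetilde H'(r) - \frac{E_2(r)}{r} = \widetilde H'(r) + O(r^{2+\s/2}).
\]

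First I would assemble this identity, being careful with the bookkeeping of the error terms: $E_1(r)/r = \frac{1}{r^2}\fint_{\p B_r\cap\bar F}w^2(x\cdot\nu)$ is exactly the boundary term subtracted in $H'$, so it cancels, and the only genuine error left is $E_2(r)/r = 2\fint_{B_r\cap\bar F}w\p_\nu w = O(r^{2+\s/2})$ by the bound \eqref{44}. Then I would invoke Lemma \ref{lem:aux}, whose hypothesis $\widetilde H(r)\ge r^{3+\s/10}$ is precisely the hypothesis of the corollary, to get $\widetilde H'(r)\ge c_0 r^{2+\s/10}$. Since $2+\s/10 < 2+\s/2$, for $r$ small the main term $c_0 r^{2+\s/10}$ dominates the error $O(r^{2+\s/2})$, so
\[
2\fint_{\p B_r\cap\W^+}w\p_r w \ge c_0 r^{2+\s/10} - Cr^{2+\s/2} \ge c\, r^{2+\s/10},
\]
which is the claim after dividing by $2$.

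The only mild subtlety — and the step I would be most careful about — is making sure the error term genuinely has a higher power of $r$ than the lower bound $c_0 r^{2+\s/10}$ produced by Lemma \ref{lem:aux}; this is where the specific choice of exponents ($\s/10$ versus $\s/2$ in the various truncations and estimates) is doing its work, and one must check \eqref{44} does supply $w\p_\nu w = O(r^{2+\s/2})$ on all of $\bar F\cap B_r$, not merely on $F\cap B_r$ or $\L\cap B_r$ separately. Once that is in hand the corollary is immediate and requires no compactness or new barrier construction — it is purely a consequence of the already-established coercivity estimate \eqref{eq:3} and the exact expression for $\widetilde H'$.
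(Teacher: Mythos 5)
Your proof is correct and is essentially identical to the paper's one-line argument: the paper also deduces the corollary by observing that $\tfrac12\widetilde H'(r)$ and $\fint_{\p B_r\cap\W^+}w\p_r w$ differ by $\fint_{\bar F\cap B_r}w\,\p_\nu w = O(r^{2+\s/2})$, which is of higher order than the lower bound $c_0 r^{2+\s/10}$ supplied by Lemma \ref{lem:aux}. Your write-up merely spells out the cancellation of the $E_1$ term and the comparison of exponents in more detail.
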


The corollary follows from Lemma \ref{lem:aux} by noticing that the difference between $\frac 12 \widetilde H'(r)$ and the left-hand side above is $\fint_{\bar F \cap B_r} w \, \p_\nu w=O(r^{2+\s/2})$.

\begin{proof}[Proof of Lemma \ref{lem:mono}]
We focus on the case $\widetilde H(r)>r^{3+\s/2}$ such that Lemma \ref{lem:aux} and its corollary apply. We compute the logarithmic derivative by using \eqref{eq:4},
\begin{align}
\nonumber \frac{\widetilde N'(r)}{\widetilde N(r)} &= \frac{1}{r}+\frac{\widetilde H''}{\widetilde H'}-\frac{\widetilde H'}{\widetilde H}\\
\nonumber &\geq \frac{4\fint_{\p B_r\cap \W^+}(\p_r w)^2 -Cr^{1+\s/2}}{\widetilde H'(r)}-\frac{2\fint_{\p B_r\cap \W^+}w\p_r w+2\fint_{B_r\cap F}w \, \p_\nu w}{\widetilde H(r)}\\
\label{eq:2} &\geq \frac{4\fint_{\p B_r\cap \W^+}(\p_r w)^2}{\widetilde H'(r)}-\frac{2\fint_{\p B_r\cap \W^+}w\p_r w}{\widetilde H(r)}-Cr^{-1+\s/10}.
\end{align}
We use that
$$\widetilde H'=2 \fint_{\p B_r\cap \W^+}w \p_r w + O(r^{2+\s/2}), \quad \quad \widetilde H=\fint_{\p B_r \cap \W^+}w^2+O(r^{3+\sigma/2}),$$
together with Lemma \ref{lem:aux}, Corollary \ref{c1}, and obtain by Cauchy-Schwartz inequality
$$\frac12 \frac{\widetilde N'(r)}{\widetilde N(r)} \ge  \frac{\fint_{\p B_r\cap \W^+}(\p_r w)^2}{\fint_{\p B_r\cap \W^+}w \p_r w} - \frac{\fint_{\p B_r\cap \W^+}w\p_r w}{\fint_{\p B_r \cap \W^+}w^2}-Cr^{-1+\s/10} \ge - C r^{-1+\s/10}.$$
\end{proof}

We have the following consequence of Lemma \ref{lem:aux}.
\begin{corollary}
 $(1+Cr^{\s/10})\widetilde N(r)$ is nondecreasing.
\end{corollary}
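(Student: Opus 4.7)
The plan is to differentiate $(1+Cr^{\sigma/10})\widetilde N(r)$ and invoke Lemma \ref{lem:mono} as an integrating factor-type estimate. The first step is to confirm $\widetilde N(r)\geq 0$, which is what allows the lower bound in Lemma \ref{lem:mono} to be multiplied by a positive quantity without flipping sign. When $\widetilde H(r)\leq r^{3+\sigma/10}$ the frequency reduces to the positive constant $(3+\sigma/10)/2$. When $\widetilde H(r)>r^{3+\sigma/10}$, Lemma \ref{lem:aux} gives $\widetilde H'(r)\geq c_0 r^{2+\sigma/10}>0$, so $\widetilde N(r)=r\widetilde H'(r)/(2\widetilde H(r))>0$.

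On the open set $\{\widetilde H<r^{3+\sigma/10}\}$ the product $(1+Cr^{\sigma/10})\widetilde N(r)$ is trivially nondecreasing since $\widetilde N$ is a positive constant there. On the open set $\{\widetilde H>r^{3+\sigma/10}\}$, the product rule combined with Lemma \ref{lem:mono} (whose constant I denote $C_0$) gives
\[
\frac{d}{dr}\bigl[(1+Cr^{\sigma/10})\widetilde N(r)\bigr] \geq \Bigl[\tfrac{C\sigma}{10} - (1+Cr^{\sigma/10})C_0\Bigr]\,r^{-1+\sigma/10}\widetilde N(r).
\]
Choosing $C\geq 20 C_0/\sigma$ and restricting $r$ to a sufficiently small interval on which $1+Cr^{\sigma/10}\leq 2$, the bracket is nonnegative, so the product is nondecreasing in this regime as well.

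The remaining point, which I expect to be the only mild technicality rather than a genuine obstacle, is to patch the two regimes across the transition set $\{\widetilde H(r)=r^{3+\sigma/10}\}$. At any transition radius $r_0$ the function $\widetilde N$ may only jump upward: if $\widetilde H$ crosses from below to above $r^{3+\sigma/10}$ at $r_0$, then the one-sided derivative satisfies $\widetilde H'(r_0^+)\geq (3+\sigma/10)r_0^{2+\sigma/10}$, so $\widetilde N$ jumps from the value $(3+\sigma/10)/2$ to something no smaller; the reverse crossing is handled symmetrically. Combined with the a.e. nonnegative derivative established on each open regime, this yields genuine monotonicity of $(1+Cr^{\sigma/10})\widetilde N(r)$ on the whole interval on which $\widetilde H$ is absolutely continuous, which completes the corollary.
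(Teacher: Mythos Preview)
Your argument is correct and is precisely the integrating-factor derivation the paper leaves implicit: the paper states the corollary without proof, as a direct consequence of the differential inequality $\widetilde N'(r)\geq -Cr^{-1+\sigma/10}\widetilde N(r)$ from Lemma~\ref{lem:mono}. The positivity of $\widetilde N$ (via Lemma~\ref{lem:aux} on the nontrivial regime) and the upward-jump check at the transition set are exactly the details one needs to fill in, and you have handled them correctly.
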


\subsection{Blowup}

Our next goal is to show the lower bound 
\begin{equation}\label{N0}
\widetilde N(0^+) \ge \frac 32 .
\end{equation}
We achieve this by blowing up $w$ at the origin so that the blowup limit is a nontrivial homogeneous global solution of the Signorini problem. Then we will obtain the desired bound for $H$ from \eqref{N0} by integrating $(1+Cr^{\s/10})\widetilde N(r)\geq 3/2$.

Let
\[
u_r(x) = \frac{u(rx)}{r} \qquad \e_r = \frac{H(r)^{1/2}}{r} \qquad w_r(x) = \frac{w(rx)}{H(r)^{1/2}} = \frac{x_n-u_r(x)}{\e_r}
\]
with the corresponding domains
\[
\W_r^+ = r^{-1}\W^+, \qquad \qquad \bar F_r = r^{-1} \bar F. 
\]
By construction the $L^2$ norm of $w_r$ over $\p B_1 \cap \W_r^+$ is one.
Also from Remark \ref{r2} we have that given $K\subset\subset B_1$, there exist $\e_0\in(0,1)$ and $C>0$ (depending also on $K$) such that
\begin{equation}\label{48}
\e_r\|w_r\|_{L^\8(B_1\cap \W^+)} \in(0,\e_0)\qquad\Rightarrow\qquad \|w_r\|_{C^{1,\b}( K \cap \bar \W^+)} \leq C\|w_r\|_{L^\8(B_1\cap \W^+)}.
\end{equation}

The following lemma establishes the existence of a homogeneous blowup limit for $w_r$'s.

\begin{lemma}\label{lem:blowup}
If $$\liminf_{r\to 0^+} \frac{\e_r}{r^{1/2+\s/20}} > 1,$$ then there exists a blowup limit $w_0 \in C^{1,\beta}(\bar B_1^+)$ such that for some sequence $r_k\to 0^+$, the graphs of $w_k=w_{r_k}$ converge on compact sets of $B_1 \times \R$  (in the $C^{1,\beta}$ topology) to the graph of $w_0$. Moreover, $w_0$ is a nontrivial solution of the Signorini problem, homogeneous of degree $\widetilde N(0^+)$.
\end{lemma}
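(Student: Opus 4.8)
The plan is to extract the blowup limit from the compactness estimate \eqref{48}, identify the PDE it solves using the argument already used to derive the Signorini problem in Section \ref{sec:alm_opt}, show it is nontrivial using the normalization and the almost-monotone frequency, and finally recognize its homogeneity degree as $\widetilde N(0^+)$. First I would fix a sequence $r_k \to 0^+$ realizing the $\liminf$, so that $\e_{r_k} \ge r_k^{1/2+\s/20}$ for all large $k$. The rescalings $w_k = w_{r_k}$ satisfy $\fint_{\p B_1 \cap \W_{r_k}^+} w_k^2 = 1$ by construction, so $\|w_k\|_{L^\infty(B_1 \cap \W^+)}$ is bounded below away from $0$; it is also bounded above, because the almost-monotonicity of $(1+Cr^{\s/10})\widetilde N(r)$ (the corollary after Lemma \ref{lem:mono}) controls the Almgren-type frequency from above on $(0,r_0]$, which via a standard doubling argument gives $H(r/2) \ge c H(r)$ and hence $\|w_k\|_{L^\infty} \le C$. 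Then $\e_{r_k}\|w_k\|_{L^\infty} \le C \e_{r_k} = C H(r_k)^{1/2}/r_k \to 0$, so the hypothesis of \eqref{48} holds for $k$ large and we get uniform $C^{1,\b}(K \cap \bar \W^+)$ bounds on every $K \subset\subset B_1$. By Arzelà--Ascoli (together with the convergence of the domains $\W_{r_k}^+ \to B_1^+$ and $\bar F_{r_k} \to B_1'$, which follows from the flatness and the $C^{1,1/2+\s}$ regularity of $g$ after rescaling) a subsequence of the graphs of $w_k$ converges in $C^{1,\b}_{loc}$ to the graph of some $w_0 \in C^{1,\b}(\bar B_1^+)$.

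Next I would identify the equation satisfied by $w_0$. Since $u_{r_k} = x_n - \e_{r_k} w_k$ is $L$-harmonic in $\W_{r_k}^+$ with coefficients converging to $\d^{ij}$ as $r_k \to 0$ (here one uses $a^{ij}=\d^{ij}$, but even with $a^{ij}\in C^\a$ the rescaled coefficients would converge), the same computation as in the proof that $\bar w$ solves \eqref{SP} gives $\D w_0 = 0$ in $B_1^+$. The sign condition $w_k \ge 0$ on $\bar F_{r_k}$ (which holds since $w \ge 0$ on $\bar F$, and on $\L$ we have $w = g$, but after rescaling by $H(r_k)^{1/2}$ and using \eqref{43} the contribution of $g$ is negligible relative to $\e_{r_k} w_k \sim H(r_k)^{1/2}$... more precisely $w_k = g(r_k x')/H(r_k)^{1/2} = O(r_k^{3/2+\s}/H(r_k)^{1/2}) = O(r_k^{3/2+\s}/(r_k \e_{r_k})) = O(r_k^{1/2+\s}/\e_{r_k}) \to 0$ on $\L_{r_k}$, so $w_0 \ge 0$ on $B_1'$) passes to the limit. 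Finally, the complementarity conditions $\p_n w_0 = 0$ on $\{w_0 > 0\}\cap B_1'$ and $\p_n w_0 \le 0$ on $B_1'$ follow by the same viscosity argument as in Section \ref{sec:alm_opt}: a test polynomial touching $w_0$ from below (resp.\ above) at a thin point can be perturbed into a comparison function for $u_{r_k}$ via the free boundary condition $|D_a u| = Q$ on $F$ and $|D_a u| \ge Q$ on $\L$, and the error terms coming from $\e_{r_k}$, the coefficients, and $g$ all vanish. Thus $w_0$ solves \eqref{SP} in the viscosity sense.

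To see that $w_0$ is nontrivial and homogeneous of degree $\widetilde N(0^+)$, I would return to the frequency. The normalization $\fint_{\p B_1 \cap \W_{r_k}^+} w_k^2 = 1$ means $H_{w_k}(1) = 1$, and the $L^2$ convergence on $\p B_1$ (from $C^{1,\b}_{loc}$ convergence plus the convergence of domains) gives $\fint_{\p B_1 \cap B_1^+} w_0^2 = 1 > 0$, so $w_0 \not\equiv 0$. For the homogeneity, note that for fixed $\rho \in (0,1)$ the frequency of $w_k$ at radius $\rho$ equals (up to the negligible $\widetilde H$-vs-$H$ and $E_1,E_2$ corrections) the frequency $\widetilde N(\rho r_k)$ of $w$; by the almost-monotonicity and the assumed lower bound on $\e_r$ — which guarantees $\widetilde H(r) \ge r^{3+\s/10}$ for $r$ small, so we are in the branch of $\widetilde N$ where the genuine frequency lives — we have $\widetilde N(\rho r_k) \to \widetilde N(0^+)$ as $k \to \infty$, uniformly in $\rho \in [\rho_0, 1]$. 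Since $\D w_0 = 0$ with $\p_n w_0 \le 0$, $w_0 \p_n w_0 = 0$ on $B_1'$, the exact (untruncated) Almgren frequency $N_{w_0}$ is monotone, and the computation above shows $N_{w_0}(\rho) = \widetilde N(0^+)$ for all $\rho \in (0,1)$; a constant frequency forces $w_0$ to be homogeneous of that degree. The main obstacle I anticipate is the bookkeeping in this last step: one must carefully match the frequency $N_{w_0}$ of the limit with $\lim \widetilde N(\rho r_k)$, controlling simultaneously the truncation in $\widetilde N$ (ensuring $\widetilde H \ge r^{3+\s/10}$ along the sequence, which is exactly where the hypothesis $\liminf \e_r/r^{1/2+\s/20} > 1$ is used), the perturbation errors $E_1, E_2$ and the $O(r^{1+\s/2})$ terms in \eqref{eq:4}, and the fact that the domains $\W_{r_k}^+$ are only converging to $B_1^+$ rather than equal to it — so the monotonicity of $N_{w_k}$ is only almost-monotonicity with a rescaled error that must be shown to vanish.
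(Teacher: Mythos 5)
Your plan follows the same broad architecture as the paper's proof (compactness from \eqref{48}, convergence of domains, identification of the limit as a Signorini solution, nontriviality from the $L^2$ normalization, homogeneity from constancy of the frequency). There is, however, a genuine gap in the step that produces the a priori $L^\infty$ bound on $w_k$ needed to trigger \eqref{48}.

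You claim that almost-monotonicity of $(1+Cr^{\s/10})\widetilde N(r)$ ``via a standard doubling argument gives $H(r/2)\geq cH(r)$ and hence $\|w_k\|_{L^\infty}\leq C$.'' Doubling of $H$ only gives $L^2$ control of $w_k$ on spheres (and, after integrating in the radius, on solid balls). It does \emph{not} by itself yield an $L^\infty$ bound, because $w_r=(x_n-u_r)/\e_r$ is a priori only harmonic in $B_1\cap\W_r^+$ with essentially no useful control at $\bar F_r$ other than the one-sided bounds \eqref{42}--\eqref{43}; you cannot invoke a mean-value property or a naive interior estimate up to a free boundary of unknown geometry. You also cannot get $L^\infty$ from the $C^{1,\beta}$ normalization $\|w\|_{C^{1,\beta}}\leq 1$, since after rescaling that only gives $\|w_r\|_{L^\infty}\lesssim r^{1+\beta}/H(r)^{1/2}$, and under your hypothesis $H(r)\gtrsim r^{3+\s/10}$ this quantity is $r^{\beta-1/2-\s/20}\to\infty$ because $\beta<1/2$. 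The correct route, which the paper follows, is: (i) use the frequency bound $\fint_{B_1\cap\W_r^+}|Dw_r|^2=\widetilde N(r)\cdot(\widetilde H/H)(r)\leq C\widetilde N(1)$ together with $\|w_r\|_{L^2(\p B_1\cap\W_r^+)}=1$ and a one-dimensional Poincar\'e--trace inequality in the $x_n$-variable to conclude a uniform $H^1(B_1\cap\W_r^+)$ bound; and then (ii) invoke the barrier-based $L^\infty$ estimate Lemma \ref{lem:l_infty}, which bounds $\|w_r\|_{L^\infty(K\cap\W_r^+)}$ by $C(\|w_r\|_{H^1}+1)$ using comparison functions with Neumann-type boundary data on $\bar F_r$. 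Only after that does $\e_r\|w_r\|_{L^\infty}\to 0$ become available, and \eqref{48} kicks in. Aside from this step, your identification of the limit PDE, the nontriviality via strong $L^2(\p B_1)$ convergence, and the homogeneity argument (matching $N_{w_0}(\rho)$ with $\lim_k\widetilde N(\rho r_k)=\widetilde N(0^+)$ while controlling the $\widetilde H$ vs $H$ discrepancy and the truncation) are correct and match the paper.
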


First we show that the $L^\8$ norm of $w_r$ can be controlled by the $H^1$ norm in $B_1\cap\W_r^+$.

\begin{lemma}\label{lem:l_infty}
Assume $\widetilde H(r)\geq r^{3+\s/10}$. Given $K\subset\subset B_1$ there exist $C$ (depending on $K$) such that
\[
 \|w_r\|_{L^\8(K\cap\W_r^+)}\leq C(\|w_r\|_{H^1(B_1 \cap \Omega^+_r)}+1).
\]
\end{lemma}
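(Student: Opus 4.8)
The plan is to reduce the $L^\infty$ bound for $w_r$ to a local boundedness estimate for a subsolution of a suitable PDE in $B_1\cap\W_r^+$, and then apply a De Giorgi–Moser-type iteration adapted to the mixed boundary condition on $\bar F_r$. Recall $w_r = (x_n - u_r)/\e_r$ where $u_r$ is $L_r$-harmonic in $\W_r^+$ (with rescaled coefficients $a^{ij}_r(x) = a^{ij}(r x)$ that are still uniformly elliptic and satisfy the smallness $(C_{\e,\d})$ at the appropriate scale), so $w_r$ satisfies $L_r w_r = \tfrac{1}{\e_r}\partial_i(a^{in}_r - \delta^{in})$ in $\W_r^+$, a right-hand side that is small (of order $\e_r^{-1}\cdot(\text{scale})^\alpha$, which is bounded using the hypothesis $\widetilde H(r)\ge r^{3+\s/10}$ to control $\e_r$ from below). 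On $\bar F_r$ we have $w_r \ge 0$ together with the near-Neumann condition $\partial_\nu w_r = O((\text{scale})^{2\beta}/\e_r)$, again controlled by the lower bound on $\e_r$. Thus $w_r^+$ is, up to controlled error terms, a subsolution of a uniformly elliptic equation with a homogeneous (or nearly homogeneous) conormal condition on the free-boundary part of $\partial(B_1\cap\W_r^+)$.

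The key steps, in order, would be: first, rescale and record the equation and boundary conditions satisfied by $w_r$, verifying that under $\widetilde H(r)\ge r^{3+\s/10}$ all the error terms (the right-hand side of $L_r w_r$, the defect in the Neumann condition, and the lower-order perturbation of the operator) are bounded by a universal constant. Second, fix $K\subset\subset B_1$ and a cutoff, and run Moser iteration: test the weak formulation of the equation for $w_r$ against $\eta^2 (w_r^+)^{2p-1}$ (or the truncated analogue), integrate by parts, and use that the boundary integral over $\bar F_r$ contributes only the controlled Neumann defect times $\eta^2(w_r^+)^{2p-1}$ — which is absorbed — while the Dirichlet part $\partial B_1\cap\W_r^+$ is handled because $\eta$ vanishes there. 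The Sobolev/trace inequalities on the Lipschitz domain $B_1\cap\W_r^+$ (uniformly in $r$, since the domains are uniformly Lipschitz and converge to $B_1^+$) then give the reverse-Hölder inequality that iterates to $\|w_r^+\|_{L^\infty(K)} \le C(\|w_r^+\|_{L^2(B_1\cap\W_r^+)} + 1)$. Third, the same argument applied to $-w_r$ (which near $\bar F_r$ satisfies $-w_r\le 0$, so there is no sign obstruction and the Neumann defect is again absorbed) bounds $\|w_r^-\|_{L^\infty(K)}$, and combining with $\|w_r\|_{H^1} \ge \|w_r\|_{L^2}$ gives the claim.

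The main obstacle I expect is handling the boundary term over $\bar F_r$ correctly and uniformly in $r$. One must check that the inhomogeneous conormal condition $\partial_\nu w_r = O(r^{2\beta}/\e_r)$ does not spoil the Caccioppoli/reverse-Hölder step: since $\beta$ is close to $1/2$ and $\e_r \gtrsim r^{1/2+\s/10}$ (from $\widetilde H(r)\ge r^{3+\s/10}$, giving $\e_r = H(r)^{1/2}/r \gtrsim r^{1/2+\s/20}$ up to the $\widetilde H$-vs-$H$ comparison in \eqref{441}), the ratio $r^{2\beta}/\e_r$ is bounded, so the boundary defect is a bounded function and contributes a term that is controlled by $\int \eta^2 (w_r^+)^{2p-1}$, absorbable by Young's inequality into the energy on one side and the iteration quantity on the other. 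The second delicate point is the uniform validity of Sobolev and trace embeddings: this is fine because, as noted after Corollary \ref{cor:cpt}, the rescaled domains $B_1\cap\W_r^+$ are uniformly Lipschitz (indeed $C^{1,\alpha}$ graphs over $B_1'$ with controlled norm, converging to $B_1^+$), so the embedding constants are uniform. Everything else is the routine Moser machinery.
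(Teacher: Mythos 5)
Your proposal and the paper reach the same conclusion but take genuinely different routes. The paper does \emph{not} run Moser iteration on $w_r$ itself; instead it introduces an auxiliary harmonic function $h\ge 0$ (resp.\ $v\le 0$) in $B_1\cap\W_r^+$ with a \emph{pure} Neumann condition on the free boundary part and Dirichlet data $w_r^+$ (resp.\ $(w_r+1)^-$) on $\partial B_1\cap\W_r^+$. By energy minimality $\|h\|_{H^1}\le\|w_r^+\|_{H^1}$, and the clean conormal problem for $h$ makes its $L^\infty(K)$ bound in terms of $\|h\|_{H^1}$ completely standard. The actual inequality $w_r\le h+1+C(1-x_n)$ is then obtained by the comparison principle: on $\partial B_1$ one has $h\ge w_r$; on $\L_r$ one has $w_r\le r^{19\sigma/20}\le 1$; and on $F_r$ the Hopf lemma applies because $\p_\nu h=0$, $\p_\nu w_r=O(r^{1/2-\sigma/4})$ is tiny, and $-C\nu_n$ is bounded below since $\nu\to -e_n$. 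The lower bound is symmetric. So the paper transfers the $L^\infty$-from-$H^1$ estimate to a model function via a barrier, rather than fighting the mixed boundary conditions directly. Your direct iteration buys a more self-contained argument; the paper's route buys cleaner boundary terms.

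Two points in your write-up need repair, though neither is fatal. First, you state the constraints as holding ``on $\bar F_r$'', but they differ on the two pieces: on $F_r$ one has both $w_r\ge 0$ and the two-sided bound $\p_\nu w_r=O(r^{2\beta}/\e_r)$, whereas on $\L_r$ one has only the \emph{one-sided} bound $\p_\nu w_r\ge -Cr^{2\beta}/\e_r$ from \eqref{43} together with the smallness $|w_r|=O(r^{19\sigma/20})$ (and no sign condition on $w_r$, since $w_r=g(x')/H(r)^{1/2}$ with $g$ of either sign). In the Caccioppoli step for $w_r^+$ the $\L_r$ boundary integral is not controlled by the Neumann defect being small but by $(w_r^+)^{2p-1}$ being $O(r^{19\sigma(2p-1)/20})$ there, which more than beats the $O(r^{-3\sigma/20})$ pointwise size of $\p_\nu w_r$; you would need to say this. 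Second, you keep the variable coefficients $a^{ij}$, but Section~\ref{sec:opt} has already reduced to $a^{ij}=\delta^{ij}$, so $w_r$ is exactly harmonic in $\W_r^+$; the right-hand side you introduce is identically zero and there is no need to invoke the hypothesis $\widetilde H(r)\ge r^{3+\sigma/10}$ to control it. That hypothesis is used (as you do use it) to bound $\e_r$ from below so that the Neumann defect $r^{2\beta}/\e_r$ stays small.
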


\begin{proof}
Consider $h\geq 0$ such that
\begin{alignat*}{3}
\D h &= 0 \text{ in } B_1 \cap \W_r^+, \qquad &&\p_\nu h = 0 \text { on } B_1\cap F_r, \qquad &&h=w_r^+ \text { on } \p B_1 \cap \W_r^+.
\end{alignat*}
Notice that, $\p_\nu w_r = O(r^{1/2-\s/4})$ over $F_r$, meanwhile $w_r \leq r^{19\s/20}$ over $\L_r$. By the comparison principle we get that $w_r \leq h+1+C(1-x_n)$. Given that $h$ is bounded on $K$ in terms of the $H^1$ norm of $h$ in $B_1 \cap \Omega_r$, which in turn is bounded by the $H^1$ norm of $w^+_r$ in the same domain, we get the desired bound from above.

To obtain the bound from below we consider instead $v\leq 0$ such that
\begin{alignat*}{3}
\D v &= 0 \text{ in } B_1 \cap \W_r^+ \qquad &&v = 0 \text { on } B_1\cap F_r \qquad &&v=(w_r+1)^- \text { on } \p B_1 \cap \W_r^+
\end{alignat*}
Using that $w_r \geq -r^{19\s/20}$ over $F$ we get that $v-1\leq w_r$. Since on $K$, $v$ is bounded by $\|v\|_{H^1}$ which in turn is bounded by $\|(w_r+1)^-\|_{H^1}$ we deduce the desired lower bound.
\end{proof}

\begin{remark}\label{rmk:l_infty}
The same proof applies to $\widetilde w_r(x) = r^{-3/2}w(rx)$ (without the assumption $\widetilde H(r)\geq r^{3+\s/10}$).
\end{remark}

\begin{proof}[Proof of Lemma \ref{lem:blowup}]
Let $r\in(0,r_0)$ with $r_0$ sufficiently small such that $\e_r>r^{1/2+\s/20}$. Given that $\widetilde H(r) = H(r)+O(r^{3+\s/2})$, this implies that, for $r_0$ possibly smaller, we have $\widetilde H(r) \ge r^{3+\s/10}$, and 
\[
 \fint_{B_1\cap \W_r^+} |Dw_r|^2 = \widetilde N(r) \leq C\widetilde N(1).
\]
Because $\|w_r\|_{L^2(\p B_1 \cap \W_r^+)}=1$ we recover that the $H^1$ norm of $w_r$ on $B_1 \cap \W_r^+$ is uniformly bounded. Indeed, one can use that for $(y',y_n)\in (\p B_1)^+$
\[
 \int_{B_1 \cap \W_r^+\cap \{x'=y'\}} w_r^2 \, d x_n \leq C\1w_r^2(y',y_n) + \int_{B_1 \cap \W_r^+\cap \{x'=y'\}} |Dw_r|^2 dx_n \2 .
\]
Then the bound follows after integrating over $y' \in B_1'$.

Consider now an extension to $B_1$ still denoted by $w_r$ and uniformly bounded in $H^1(B_1)$. This means that some sequence $w_k=w_{r_k}$ converges to $w_0$ weakly in $H^1(B_1)$ and strongly in $L^2(\p B_1)$. Moreover $w_0$ is nontrivial because $\|w_0\|_{L^2((\p B_1)^+)}=1$.

The almost optimal regularity Proposition \ref{thm:almost_opt} gives that $\e_r\to0$. From the $L^\8$ bound in Lemma \ref{lem:l_infty} and \eqref{48} we deduce that $w_k$'s are uniformly bounded in $C^{1,\beta}$ in the interior and the convergence to $w_0$ holds in the $C^{1,\b}$ norm on compact sets. Clearly $w_0$ solves the Signorini problem in $B_1^+$, 
and from the convergence in $C^1_{loc}(B_1^+)$ we get that for any $r\in(0,1)$
\[
r^2\frac{\fint_{B_{r}^+}|Dw_0|^2}{\fint_{(\p B_{r})^+}w_0^2} = \lim_{k\to\8} r^2\frac{\fint_{B_{r}\cap\W^+_{r_k}}|Dw_{r_k}|^2}{\fint_{\p B_{r}\cap\W^+_{r_k}}w_{r_k}^2} = \lim_{k\to\8} \widetilde N(rr_k) = \widetilde{N}(0^+).
\]
Given that the standard frequency of $w_0$ is constant we get that it is necessarily a homogeneous function.
\end{proof}

The minimum homogeneity of a nontrivial solution of the Signorini problem is $3/2$. On the other hand if the hypothesis about $\liminf_{r\to 0^+} \e_r/r^{1/2+\s/20}$ is not satisfied, then $\tilde N(0+)=3/2 + \sigma/20$. In conclusion we have established the claim \eqref{N0}.

At this point we are ready to settle our main result.

\begin{proof}[Proof of Theorem \ref{thm:main1}]
After a sufficiently large dilation we can arrange $u$ to satisfy all the hypotheses of this section. Thus, by \eqref{N0}, $(1+Cr^{\s/10})\widetilde N(r) \geq 3/2$ and by integrating,
\[
\widetilde H(r) = H(r) + O(r^{3+\s/2}) \leq Cr^3  \qquad\Rightarrow\qquad H(r) \leq Cr^3.
\]
Now we consider $\widetilde w_r(x) = r^{-3/2}w(rx)$, it is not difficult to check (see \eqref{441}) that its $H^1$ norm is uniformly bounded. Remark \ref{rmk:l_infty} gives the desired modulus of continuity at $0$,
\[
\sup_{r\in(0,r_0)} r^{-3}H(r)\leq C \qquad\Rightarrow\qquad \sup_{r\in(0,r_0)} r^{-3/2} \osc_{\W^+ \cap B_r} w \leq  C,
\]
and we established the pointwise $C^{1,1/2}$ of $u$ at $0 \in \p' \L$. As in the proof of Proposition \ref{thm:almost_opt}, this can be easily extended to $\bar B_{1/2} \cap \bar \Omega^+$ by standard arguments. 
\end{proof}

\bibliographystyle{plain}
\bibliography{mybibliography}

\begin{thebibliography}{10}

\bibitem{MR618549}
H.~W. Alt and L.~A. Caffarelli.
\newblock Existence and regularity for a minimum problem with free boundary.
\newblock {\em J. Reine Angew. Math.}, 325:105--144, 1981.

\bibitem{MR2120184}
I.~Athanasopoulos and L.~A. Caffarelli.
\newblock Optimal regularity of lower dimensional obstacle problems.
\newblock {\em Zap. Nauchn. Sem. S.-Peterburg. Otdel. Mat. Inst. Steklov.
  (POMI)}, 310(Kraev. Zadachi Mat. Fiz. i Smezh. Vopr. Teor. Funkts. 35
  [34]):49--66, 226, 2004.

\bibitem{MR2405165}
I.~Athanasopoulos, L.~A. Caffarelli, and S.~Salsa.
\newblock The structure of the free boundary for lower dimensional obstacle
  problems.
\newblock {\em Amer. J. Math.}, 130(2):485--498, 2008.

\bibitem{MR2145284}
Luis Caffarelli and Sandro Salsa.
\newblock {\em A geometric approach to free boundary problems}, volume~68 of
  {\em Graduate Studies in Mathematics}.
\newblock American Mathematical Society, Providence, RI, 2005.

\bibitem{Caffarelli1987a}
Luis~A. Caffarelli.
\newblock A {H}arnack inequality approach to the regularity of free boundaries.
  {I}. {L}ipschitz free boundaries are {$C^{1,\alpha}$}.
\newblock {\em Revista Matem\'atica Iberoamericana}, 3(2):139--162, 1987.

\bibitem{MR1029856}
Luis~A. Caffarelli.
\newblock A {H}arnack inequality approach to the regularity of free boundaries.
  {III}.\ {E}xistence theory, compactness, and dependence on {$X$}.
\newblock {\em Ann. Scuola Norm. Sup. Pisa Cl. Sci. (4)}, 15(4):583--602
  (1989), 1988.

\bibitem{MR973745}
Luis~A. Caffarelli.
\newblock A {H}arnack inequality approach to the regularity of free boundaries.
  {II}. {F}lat free boundaries are {L}ipschitz.
\newblock {\em Comm. Pure Appl. Math.}, 42(1):55--78, 1989.

\bibitem{MR2367025}
Luis~A. Caffarelli, Sandro Salsa, and Luis Silvestre.
\newblock Regularity estimates for the solution and the free boundary of the
  obstacle problem for the fractional {L}aplacian.
\newblock {\em Invent. Math.}, 171(2):425--461, 2008.

\bibitem{MR2813524}
D.~De~Silva.
\newblock Free boundary regularity for a problem with right hand side.
\newblock {\em Interfaces Free Bound.}, 13(2):223--238, 2011.

\bibitem{MR3485136}
Daniela De~Silva, Fausto Ferrari, and Sandro Salsa.
\newblock Regularity of the free boundary for two-phase problems governed by
  divergence form equations and applications.
\newblock {\em Nonlinear Anal.}, 138:3--30, 2016.

\bibitem{MR679313}
Avner Friedman.
\newblock {\em Variational principles and free-boundary problems}.
\newblock Pure and Applied Mathematics. John Wiley \&\ Sons, Inc., New York,
  1982.
\newblock A Wiley-Interscience Publication.

\bibitem{Garofalo2014}
Nicola Garofalo and Mariana {Smit Vega Garcia}.
\newblock {New monotonicity formulas and the optimal regularity in the
  Signorini problem with variable coefficients}.
\newblock {\em Advances in Mathematics}, 262:682--750, 2014.

\bibitem{MR2558329}
Nestor Guillen.
\newblock Optimal regularity for the {S}ignorini problem.
\newblock {\em Calc. Var. Partial Differential Equations}, 36(4):533--546,
  2009.

\end{thebibliography}

\end{document}